\documentclass[12pt,reqno]{amsart}

\author{Paul Pollack}
\address{Department of Mathematics\\ University of Georgia\\ Athens, GA 30602}

\email{pollack@uga.edu}

\subjclass[2020]{Primary 11N64; Secondary 11K65, 11N37}

\usepackage{mathtools}
\usepackage{multirow}
\usepackage{empheq}
\usepackage{colonequals}

\usepackage{amsmath,amssymb,amsthm,geometry,mathscinet,enumitem,url,amsrefs}
\usepackage{parskip}
\usepackage{stackengine}
\usepackage{xcolor}
\usepackage{booktabs}

\renewcommand\epsilon\varepsilon
\renewcommand\supset\supseteq
\usepackage[utf8]{inputenc}
\makeatletter
\NewCommandCopy\@@pmod\pmod
\DeclareRobustCommand{\pmod}{\@ifstar\@pmods\@@pmod}
\def\@pmods#1{\mkern4mu({\operator@font mod}\mkern 6mu#1)}
\makeatother

\numberwithin{equation}{section}

\DeclareMathAlphabet{\curly}{U}{rsfs}{m}{n}
\newtheorem{thm}{Theorem}[section]
\newtheorem{cor}[thm]{Corollary}
\newtheorem{prop}[thm]{Proposition}
\newtheorem{lem}[thm]{Lemma}

\newtheorem*{hypp}{Hypothesis U}

\theoremstyle{remark}
\newtheorem*{rmk}{Remark}

\def\Fact{\curly{F}}

\def\sym{\mathrm{sym}}

\def\Ii{\mathcal{I}}
\def\Ll{\curly{L}}

\def\Nn{\mathcal{N}}

\def\Dd{\mathcal{D}}

\newcommand\rad{\mathrm{rad}}

\def\Z{\mathbb{Z}}

\def\Mm{\curly{M}}

\def\Pp{\mathcal{P}}

\def\lcm{\mathop{\mathrm{lcm}}}
\renewcommand\subset\subseteq

\setlist[enumerate,1]{
  label=\textup{(\roman*)},
  ref=\roman*,
  font=\normalfont
}
\begin{document}

\title{Small values of Carmichael's $\lambda$-function}

\begin{abstract} Let $\lambda(n)$ be the exponent of the multiplicative group $(\Z/n\Z)^{\times}$, and set $L(x,y) = \#\{n\le x: \lambda(n)\le y\}$. We prove an upper bound for $\log \frac{L(x,y)}{x}$ valid for $\exp((\log_2{x})^{1+\epsilon}) \le y\le x/\exp((\log_2{x})^{1+\epsilon})$. Our bound is asymptotically sharp under a plausible hypothesis on powersmooth shifted primes. As an application, we obtain a new upper bound on the count of odd $n\le x$ for which the order of $2$ modulo $n$ is appreciably smaller than $x^{1/2}$.
\end{abstract}

\maketitle

\section{Introduction}
Every student passing through a course in elementary number theory learns about Euler's theorem, according to which $a^{\phi(n)}\equiv 1\pmod{n}$ for all integers $a$ with $\gcd(a,n)=1$. Let $\lambda(n)$ denote the least positive integer that can take the place of $\phi(n)$ in this result --- in other words, the exponent of the multiplicative group $(\Z/n\Z)^{\times}$. It was  already known to Gauss that $\lambda(n) = \lcm_{p^k\parallel n}[\lambda(p^k)]$, where $\lambda(p^k) = p^{k-1}(p-1)$ if $p$ is odd or $k \le 2$, and $\lambda(2^k) = 2^{k-2}$ for every $k\ge 3$ (see Articles 82--92 of \textit{Disquisitiones Arithmeticae} \cite{gauss66}). But R.\,D. Carmichael \cite{carmichael10}, working a century later, was the first to study properties of $\lambda(n)$ as an \emph{arithmetic function}; accordingly, one typically refers to $\lambda(n)$ as \textsf{Carmichael's lambda-function}. 

Determining the maximal order of $\lambda(n)$ is trivial: Clearly, $\lambda(n)\le \phi(n) \le n-1$ for all integers $n>1$, with $\lambda(n)=n-1$ whenever $n$ is prime. The average, minimal, and typical (normal) orders are considerably more interesting; these were investigated by Erd\H{o}s, Pomerance, and Schmutz in \cite{EPS91}. Concerning the typical size of $\lambda(n)$, those authors prove the existence of an explicit constant $A \approx 0.227$ for which \begin{equation}\label{eq:lambdamean} \lambda(n) = n (\log{n})^{-\log_3 n- A + o(1)} \end{equation}
as $n$ tends to infinity through a set of asymptotic density $1$. (Here and below, $\log_{k}$ is the $k$th iterate of the natural logarithm.) As for the minimal order of $\lambda(n)$, they show that for a certain constant $C>0$ and  infinitely many $n$,
\[ \lambda(n) \le (\log{n})^{C\log_3 n}, \]
while on the other hand, 
\begin{equation}\label{eq:lambdalower} \lambda(n) \ge (\log{n})^{(\frac{1}{\log{2}} + o(1))\log_3{n}}  \end{equation}
whenever $n$ tends to infinity.

In this paper, we study the distribution of ``intermediate'' values of $\lambda(n)$, meaning values of $\lambda(n)$
lying between the scales suggested by its typical and minimal orders.  Let \[ L(x,y) = \#\{n \le x: \lambda(n) \le y\}. \]
As long as $y$ and $x/y$ are both ``reasonably large'', we obtain an upper bound for $\log \frac{L(x,y)}{x}$ that we expect to be asymptotically sharp. To state our results precisely, it is convenient to let
\[ \Ll(x,y) = \exp\left(\frac{\log{x}\log_3{x}}{\log_2{y}}\right), \quad\text{setting}\quad \Ll(x) := \Ll(x,x) = \exp\left(\frac{\log{x}\log_3{x}}{\log_2{x}}\right). \]

\begin{thm}\label{thm:main} Fix $\epsilon > 0$. Suppose that in the domain $x, y \ge 100$, we have both
\begin{equation}\label{eq:nottoosmall} y \ge \exp((\log_2 x)^{1+\epsilon}) \end{equation}
and
\begin{equation}\label{eq:opaque} y \le x/\exp((\log_2{x})^{1+\epsilon}). \end{equation}
Then, as $x\to\infty$,
\[ \frac{1}{x} L(x,y) \le 
\begin{cases}\Ll(x/y)^{-1+o(1)} &\text{if $y\ge \Ll(x)^2$}, \\
\Ll(x,y)^{-1+o(1)} &\text{otherwise}.
\end{cases}\]
Furthermore, under a plausible hypothesis on powersmooth shifted primes {\rm (\textsf{Hypothesis U} \emph{below})}, equality holds in both cases.
\end{thm}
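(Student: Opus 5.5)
We argue via a structural decomposition of $n$ and a rank (Rankin-type) estimate. The upper bound only; the conditional lower bound would come from an explicit construction under Hypothesis U, treated separately.

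For $n\le x$ with $\lambda(n)\le y$, write $n = ab$ with $a$ the largest divisor of $n$ composed of primes $p$ for which $p-1$ is ``$\lambda$-constrained''. More concretely, every prime $p\mid n$ has $p-1\mid \lambda(n)=:m$, with $m\le y$. The key dichotomy is that either $n$ has a large part $a\ge x/y^{O(1)}$ all of whose prime factors satisfy $p-1\mid m$, or $n$ has a large squarefull or large-prime component, which is rare. In either case we count pairs $(m,n)$ with $m\le y$ and $n$ composed of primes in $\mathcal{P}_m=\{p: p-1\mid m\}$, up to a negligible exceptional set.

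Sum over $m\le y$ and bound $\#\{n\le x: p\mid n\Rightarrow p\in\mathcal{P}_m\}$ by Rankin's trick:
\[
\sum_{\substack{n\le x\\ p\mid n \Rightarrow p\in\mathcal{P}_m}} 1 \;\le\; x^{\sigma}\prod_{p\in\mathcal{P}_m}(1-p^{-\sigma})^{-1}
\;\le\; x^{\sigma}\exp\Bigl(\sum_{d\mid m} d^{-\sigma}\Bigr).
\]
Here we take $\sigma=1-\delta$, and the sum over $d\mid m$ bounds $\sum_{p\in\mathcal P_m} p^{-\sigma}$. Summing over $m\le y$ gives
\[
L(x,y) \ll \sum_{m\le y} x^{1-\delta}\exp\Bigl(\sum_{d\mid m}d^{-1+\delta}\Bigr),
\]
and the inner quantity is at most $\exp(\sigma_{-1+\delta}(m))$. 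The main step is to bound $\max_{m\le y}\sum_{d\mid m}d^{-1+\delta}$. Using that this is maximized (up to $o(1)$ in the exponent) by $m$ being a product of the first primes, one expects
\[
\sum_{d\mid m} d^{-1+\delta}\le \exp\bigl((1+o(1))\,(\log y)^{\delta}/(\delta\log_2 y)\bigr).
\]
We then need this to be $\le \delta\log x\cdot o(1)$ in order to get $L(x,y)/x \le y\,x^{-\delta(1+o(1))}$.

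Choose $\delta \approx \log_3 x/\log_2 y$, so that $(\log y)^\delta \approx \log_2 x$. This makes the divisor contribution roughly $\log_2 x\cdot\log_2 y/\log_3 x$, which is $o(\delta\log x)$. This yields the bound $x^{-\delta}=\Ll(x,y)^{-1+o(1)}$, provided the factor $y$ from summing over $m$ is absorbed, i.e.\ $y\le \Ll(x,y)^{o(1)}$.

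That absorption fails in general, so the case split enters here. When $y\ge \Ll(x)^2$, the factor $y$ dominates. Instead we bound $n\le x$ by writing $n$ with $m\mid$ something and use that $n$ restricted to $\mathcal{P}_m$ has at most about $x/m\cdot(\cdots)$ options. Equivalently, since $n$ is divisible by many $p$ with $p-1\mid m$, we effectively count $n/\gcd$ of size $x/y$, giving $\Ll(x/y)^{-1+o(1)}$; we run the same Rankin argument with $x$ replaced by $x/y$ after extracting the factor, using \eqref{eq:opaque} so that $\log_2(x/y)\sim\log_2 x$.

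We expect this optimization to be the main obstacle: correctly sizing the sum over $m$ against $x^{-\delta}$, and justifying that the extremal $m$ for $\sigma_{-1+\delta}$ gives the stated bound uniformly in the range \eqref{eq:nottoosmall}. The hypothesis \eqref{eq:nottoosmall} ensures $\delta\to0$ and $(\log y)^\delta$ is controllable.
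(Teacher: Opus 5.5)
Your small-$y$ argument (Rankin's trick for $\#\{n\le x:\ p\mid n\Rightarrow p-1\mid m\}$ with $1-\sigma=\log_3 x/\log_2 y$, so that $(\log y)^{1-\sigma}=\log_2 x$, then a sum over $m\le y$) is essentially the paper's proof of Proposition~\ref{prop:smally}. Apart from small slips in the divisor-sum bookkeeping, it works exactly when the factor $y$ from the $m$-sum is $\Ll(x,y)^{o(1)}$. That fails once $y\ge\Ll(x)^{\eta}$ for a fixed $\eta>0$. So your case split leaves $\Ll(x)^{\eta}\le y<\Ll(x)^2$, which belongs to the ``otherwise'' case, uncovered. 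The paper handles $x^{1/\log_2 x}<y<\Ll(x)^2$ with the large-$y$ bound, since there $\log\Ll(x,y)\sim\log\Ll(x)\sim\log\Ll(x/y)$. You also omit the conditional lower bounds, which are part of the statement. The paper gets them from an Erd\H{o}s--Pomerance construction using primes $p\le X$ with $P^{\ast}(p-1)\le\frac12\log y$. For $y\ge\Ll(x)^2$ it uses products $ab$ with $a\le y/\Ll(x/y)$ rough and $b$ smooth with $\lambda(b)\le\Ll(x/y)$.

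The genuine gap is the upper bound for large $y$, which is the heart of the theorem. Your last paragraph gives no actual mechanism, and the one it hints at cannot work. No bound for individual $m$ can succeed. Take $m=\lcm[1,\dots,\lfloor\frac12\log y\rfloor]<y$, or any multiple of it up to $y$. Under Hypothesis U, the construction in Proposition~\ref{prop:smallylower} gives at least $x\Ll(x,y)^{-1+o(1)}$ integers $n\le x$ with $p\mid n\Rightarrow p-1\mid m$. So the count is not of size about $x/m$ for such $m$. Any bound uniform in $m$, summed over $m\le y$, then exceeds $x$ once $y$ is a power of $x$. The saving must come from averaging over $m$. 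Also, \eqref{eq:opaque} does not give $\log_2(x/y)\sim\log_2 x$. It only gives $\log_2(x/y)\ge(1+\epsilon)\log_3 x$, and $x/y=\exp((\log_2 x)^{O(1)})$ is exactly the delicate regime.

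The missing idea is the paper's use of $d=\phi(n)/\lambda(n)$. After reducing to squarefree $n$ with $\omega(n)\le W$, $n$ not too small and $\lambda(n)\in(y/2,y]$, one has $d\ge(x/y)^{1+o(1)}$. Moreover $d\,\rad(d)\mid\phi(n)$, because $\phi(n)$ and $\lambda(n)$ have the same prime factors. The argument then has three ingredients:
\begin{enumerate}
\item Brun--Titchmarsh bounds the number of such $n$ with $D\mid\phi(n)$ by $\frac{x}{D}\sum_{w\le W}(C\log_2 x\,\log_2(3D))^w g_w(D)/w!$ (Lemma~\ref{lem:PV2}).
\item The set of values $d\,\rad(d)$ is sparse, because $\sum_{d\le T}1/\rad(d)\le T^{\delta}$ (Lemma~\ref{lem:dB}).
\item Sums of $g_w$ over sparse sets are controlled, uniformly for $w$ in the critical range, by Proposition~\ref{prop:orderedfact}.
\end{enumerate}
That is where $\Ll(x/y)^{-1}$ comes from, and nothing in your sketch substitutes for it.
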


Some examples of Theorem \ref{thm:main} are recorded in Table \ref{tbl:thmapplications}.

\begin{table}[t]
\centering
\renewcommand{\arraystretch}{2.2} 
\begin{tabular}{l@{\qquad}l@{\qquad}l}
\textbf{Constraint} & \textbf{$y = y(x)$} & \textbf{Upper bound for $L(x,y)$, as $x \to \infty$} \\ \toprule

\multirow{3}{*}{\shortstack{$\alpha \in K \subseteq (0,1)$ \\ \small (compact subset $K$)}} 
    & $x^{1-\alpha}$ & $x \exp \left( -(\alpha + o(1)) \frac{\log x \log_3 x}{\log_2 x} \right)$ \\
    & $\exp((\log x)^\alpha)$ & $x \exp \left( -(\frac{1}{\alpha} + o(1)) \frac{\log x \log_3 x}{\log_2 x} \right)$ \\
    & $x / \exp((\log x)^\alpha)$ & $x \exp \left( -(\frac{1}{\alpha} + o(1)) \frac{(\log x)^\alpha \log_3 x}{\log_2 x} \right)$ \\ \midrule

\multirow{1}{*}{\shortstack{$\beta \in K \subseteq (1,\infty)$ \\ \small (compact subset $K$)}} 
    & $\exp((\log_2 x)^\beta)$ & $x^{1-\frac{1}{\beta}+o(1)}$ \\ \bottomrule
\end{tabular}

\vspace{6pt} 
\caption{A few instances of Theorem \ref{thm:main}.}
\label{tbl:thmapplications}
\end{table}


While $L(x,y)$ does not seem to have been studied previously in the literature, a closely related quantity appears in Theorem 5 of \cite{FPS01}, by Friedlander, Pomerance, and Shparlinski. It is shown there that for all large $x$ and $y$ with $y\ge \exp((\log_2{x})^3)$, 
\begin{equation}\label{eq:FPSbound} \#\{n \le x: \lambda(n) \le ny^{-1}\} \le x\exp(-0.69(\log{y}\log\log{y})^{1/3}). \end{equation}
We can improve this by our methods. Note that the left-hand side of \eqref{eq:FPSbound} is majorized by $L(x,xy^{-1})$. We show in Proposition \ref{prop:largeyupper} that $L(x,xy^{-1}) \le x\Ll(y)^{-1+o(1)}$ whenever $x\to\infty$ and $\exp((\log_2{x})^{1+\epsilon}) \le y \le x/100$.

The quoted normal order theorem of Erd\H{o}s--Pomerance--Schmutz implies that asymptotically 100\% of $n\le x$ have $\lambda(n) < x \exp(-(1+o(1))\log_2{x} \log_3{x})$. So one cannot hope to obtain a nontrivial upper bound on $L(x,y)$ unless $y$ is a bit smaller than $\approx x/\exp(\log_2 x\log_3 x)$. This makes our assumption \eqref{eq:opaque} seem fairly natural. Similarly, \eqref{eq:lambdalower} shows that the only way one can have $\lambda(n) \le \exp(\log_2 x \log_3 x)$ is if $n$ itself is very small (e.g., we must have $n < \exp((\log{x})^{0.7})$). This motivates assuming a condition such as \eqref{eq:nottoosmall}.

We now explain the hypothesis under which the upper bounds in Theorem \ref{thm:main} are sharp. Let $P^{+}(n)$ (respectively, $P^{\ast}(n)$) denote the largest prime (resp., prime power) dividing $n$, setting $P^{+}(1) = P^{\ast}(1) = 1$. We say $n$ is \textsf{$y$-smooth} (resp., \textsf{$y$-powersmooth}) if $P^{+}(n)\le y$ (resp., $P^{\ast}(n)\le y$). The count of $y$-smooth (resp., $y$-powersmooth) $n\le x$ is denoted $\Psi(x,y)$ (resp., $\Psi^{\ast}(x,y)$).

Fix $\epsilon > 0$. It is known that whenever $x,y$, and $u:=\frac{\log{x}}{\log{y}}$ tend to infinity, with $y \ge (\log{x})^{1+\epsilon}$,  
\[ x\exp(-(1+o(1))u \log{u}) \le \Psi^{\ast}(x,y) \le \Psi(x,y) \le  x\exp(-(1+o(1))u \log{u}).\]
That $\Psi(x,y) = x\exp(-(1+o(1))u\log{u})$ throughout this range is a consequence of work of de Bruijn \cites{dB51,dB66} and Erd\H{o}s--Canfield--Pomerance \cite{CEP83}. To deduce the claimed lower bound on $\Psi^{\ast}(x,y)$, one can then appeal to work of Naimi \cite{naimi88}, which gives that the count of \emph{squarefree} smooth $n\le x$ is within a factor of $\exp(o(u\log{u}))$ of $\Psi(x,y)$ in this regime of $x$ and $y$.

Let $\Pi^{\ast}(x,y)$ denote the corresponding count of powersmooth shifted primes $p-1$; that is, 
\[ \Pi^{\ast}(x,y) := \#\{p\le x: P^{\ast}(p-1)\le y\}. \]
The heuristic principle that shifted primes behave like random integers of the same size (up to local considerations) suggests that $\Psi^{\ast}(x,y)/x \approx \Pi^{\ast}(x,y)/\pi(x)$ in a wide range of $x$ and $y$. We will prove lower bounds matching the upper bounds of Theorem \ref{thm:main} conditional on the following precise form of this hypothesis.

\begin{hypp} Fix $\epsilon > 0$. Whenever $x, y$, and $u:=\frac{\log{x}}{\log{y}}$ tend to infinity with $y \ge (\log{x})^{1+\epsilon}$, we have
\[ \Pi^{\ast}(x,y) = \pi(x) \exp(-(1+o(1))u \log{u}). \]
\end{hypp}

(The ``$U$'' is taken from ``\textsf{ultrafriable},'' a term sometimes preferred over ``powersmooth''.) Conjectures of a similar sort were introduced by Pomerance (e.g., \cites{Pom80, PSW80, pomerance81, pomerance89}) to study pseudoprimes and the value-distribution of Euler's totient function; see also \cites{BFPS04,pollack19,pollack19power}. The upper bound implicit in Hypothesis U is known (it follows, for instance, from the results quoted above on $\Psi(x,y)$ together with Theorem 1 of \cite{PS02}). Unfortunately, our applications require \emph{lower} bounds on $\Pi^{\ast}(x,y)$.

By definition, $\lambda(n)$ is the maximal possible order of an element of $(\Z/n\Z)^{\times}$. Thus, for every odd integer $n$, the order $l(n)$ (say) of $2$ modulo $n$ divides $\lambda(n)$. It is natural to ask whether our results have any implications for the distribution of small values of $l(n)$.\footnote{With minor changes, everything we do for $l(n)$ carries over with $2$ replaced by an arbitrary fixed $b \notin \{0,\pm 1\}$, where of course we now impose the condition $\gcd(n,b)=1$.}

It is known that all but $o(x)$ odd integers $n\le x$ satisfy $l(n) > x^{1/2}$. Furthermore, $\frac{1}{2}$ marks the boundary of our current knowledge: We do not know the same result if the exponent $\frac{1}{2}$ is replaced by a larger constant. (But see Theorem 1 of \cite{KP05} for an improvement of $\frac{1}{2}$ by ``an arbitrary $o(1)$,'' as well as a discussion of what is known under the Generalized Riemann Hypothesis.)  The following corollary to Theorem \ref{thm:main} gives a strong upper bound on solutions to $l(n) \le x^{\beta}$ once $\beta$ dips appreciably below $\frac{1}{2}$.

\begin{cor}\label{cor:orders} Fix $\delta > 0$. For all $x$ sufficiently large in terms of $\delta$,
\[ \#\{\text{odd }n\le x: l(n) \le x^{\beta}\} \le x/\Ll(x)^{\frac{1}{2}-\beta-\delta} \]
uniformly in $\beta \ge 0$.
\end{cor}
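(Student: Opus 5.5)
The plan is to split the odd $n\le x$ with $l(n)\le x^{\beta}$ according to the size of $\lambda(n)$. We may assume $\beta<\frac12-\delta$, since otherwise the claimed bound exceeds $x$. Throughout, use that $\Ll(x^{c})=\Ll(x)^{c+o(1)}$ uniformly for $c$ in a compact subset of $(0,1]$, because $\log_2$ and $\log_3$ change by only $O(1)$ and $o(1)$ under $x\mapsto x^{c}$. Put
\[ y := x^{\frac12+\beta+\delta/2}. \]

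\emph{Case 1: $\lambda(n)\le y$.} Here we invoke Theorem \ref{thm:main}, or more precisely Proposition \ref{prop:largeyupper}, which gives $L(x,xY^{-1})\le x\Ll(Y)^{-1+o(1)}$ for $Y\le x/100$. With $Y=x/y=x^{\frac12-\beta-\delta/2}\ge x^{\delta/2}$, this yields a count of at most
\[ x\,\Ll(x)^{-(\frac12-\beta-\delta/2)+o(1)}\le x/\Ll(x)^{\frac12-\beta-\delta}. \]
We need uniformity in $\beta$. Either $\beta$ ranges over the compact set $[0,\frac12-\delta]$ and we use the $o(1)$ uniformly, or we discretize $\beta$ into finitely many steps of size $\delta/10$ and use monotonicity in $\beta$. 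The range hypotheses of the theorem plainly hold for $y$ a power of $x$.

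\emph{Case 2: $\lambda(n)>y$ but $l(n)\le x^{\beta}$.} Now the index $\lambda(n)/l(n)$ exceeds $x^{\frac12+\delta/2}$. We need to show that such $n$ are very rare, say $O(x^{1-\eta})$ for some $\eta=\eta(\delta)>0$, which is far smaller than $x/\Ll(x)^{1/2}$. Since $\lambda(n)=\lcm_{p\mid n}(p-1)$ (ignoring the harmless $2$-power and prime-square contributions, which we handle separately via squarefull parts) and $l(n)=\lcm_{p\mid n} l(p)$, the ratio $\lambda(n)/l(n)$ is at most the product over $p\mid n$ of the indices $i_p=(p-1)/l(p)$. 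Hence either some prime $p\mid n$ has $i_p> x^{\delta/10}$, or $n$ has many primes with moderately large index.

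I would first treat the single-large-index case. Primes $p\le t$ with $i_p\ge k$ divide $2^{m}-1$ for some $m\le t/k$, so there are at most $\sum_{m\le t/k}\omega(2^m-1)\ll t^2/k^2$ of them. Summing $x/p$ over such $p$ dyadically gives a saving of a power of $x$, provided $p$ is not too small. For small $p$ the index cannot be large anyway, since $i_p<p$.

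The remaining subcase, where the large index arises from many primes each of small index, is the step I expect to be the main obstacle. There I would instead use $n\mid 2^{l(n)}-1$ directly: group $n$ by $m=l(n)\le x^{\beta}$, write $n=ab$ with $a$ the part composed of primes of index $\le x^{\delta/10}$, and bound the number of $a\le x$ dividing $2^m-1$ using the small-index constraint. This mirrors the Kurlberg--Pomerance argument cited as \cite{KP05}. If this gets messy, the fallback is to choose the split point $y$ slightly differently so that Case 2 forces a single prime of index at least $x^{\delta/10}$.
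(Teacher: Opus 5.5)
Your Case~1 is the paper's condition (c), handled the same way via Proposition~\ref{prop:largeyupper}, and your treatment of uniformity in $\beta$ is fine. The gap is in Case~2, and it is in the dichotomy itself: splitting at the absolute threshold $i_p>x^{\delta/10}$ creates a subcase you cannot count. Your bound $\ll t^2/k^2$ for primes $p\in(t,2t]$ with $i_p\ge k$ is nontrivial only when $t<k^2$. So the saving is lost for \emph{large} $p$, not small $p$. With $k=x^{\delta/10}$ you have no control over $n$ divisible by a prime $p\in(x^{\delta/5},x]$ with $i_p>x^{\delta/10}$. For $p$ near $x$ this is essentially the condition $l(p)<p^{1-\delta/10}$, and, as the closing remark of the paper points out, even power-sparsity of $\{p: l(p)<p^{1-\theta}\}$ is only conjectural. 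Moreover, such a prime is harmless for Case~2 whenever $i_p\le p^{1/2+\delta/4}$, so this subcase is both unnecessary and intractable. The ``many primes'' subcase is not yet an argument. The fallback cannot work either: $\prod_{p\mid n} i_p$ can be huge because $n$ has many small prime factors with tiny $l(p)$ (say, prime factors of $2^m-1$ for small $m$), none of which has index $\ge x^{\delta/10}$.

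The missing idea is to compare $l(p)$ with $\sqrt p$ rather than with an absolute threshold. Let $\Pp_{\delta/4}$ be the odd primes with $l(p)<p^{1/2-\delta/4}$, and let $n_{\Pp}$ be the largest divisor of $n$ composed of such primes. Lemma~\ref{lem:KR} gives the following (for squarefree $n$ this is essentially your bound by $\prod i_p$):
\[ \frac{\lambda(n)}{l(n)}\le \frac{n}{\prod_{p\mid n}l(p)}=\frac{n}{\rad(n)}\prod_{p\mid n}\frac{p}{l(p)}\le \frac{n}{\rad(n)}\cdot n_{\Pp}\cdot x^{\frac12+\frac14\delta}. \]
The last step uses $p/l(p)\le p^{1/2+\delta/4}$ for $p\notin\Pp_{\delta/4}$ and $p/l(p)\le p$ otherwise. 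So in Case~2, either the squarefull part of $n$ (which is at least $n/\rad(n)$) or $n_{\Pp}$ exceeds $x^{\delta/8}$. The first happens for $O(x^{1-\delta/16})$ integers $n\le x$. For the second, apply your own estimate $\sum_{m\le M}\omega(2^m-1)<M^2$ at the correct threshold. It gives $\#(\Pp_{\delta/4}\cap[1,T])\le T^{1-\delta/2}$, hence $\sum_{p\in\Pp_{\delta/4}}p^{-(1-\delta/4)}<\infty$ (Lemma~\ref{lem:convergence}). Rankin's trick then bounds the count by $x\cdot x^{-\delta^2/32}\sum_{p\mid d\Rightarrow p\in\Pp_{\delta/4}}d^{-1+\delta/4}\ll x^{1-\delta^2/32}$. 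This is the power saving you wanted. It is essentially the paper's argument via conditions (b)--(d), with the threshold $\Mm$ replaced by $x^{\delta/8}$.
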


Corollary \ref{cor:orders} becomes trivial when $\beta \ge \frac{1}{2}$. At the endpoint $\beta=\frac{1}{2}$, we have no nontrivial estimates of comparable strength. In fact, at present we cannot prove that there are $O(x/(\log{x})^2)$  \emph{primes} $p\le x$ with $l(p) \le x^{1/2}$ --- even permitting ourselves the luxury of assuming the Generalized Riemann Hypothesis! 

Notwithstanding our current ignorance, it seems likely the ``correct'' version of Corollary \ref{cor:orders} has $1-\beta$ in place of $\frac{1}{2}-\beta$ in the exponent of $\Ll(x)$. (See the remark concluding \S\ref{sec:corollary}.) Conditional on Hypothesis U, this would be best possible in the
following sense: small modifications to the proof of
Theorem \ref{thm:main} show that, for fixed $\beta\in(0,1)$ and $x\to\infty$, there are at least
$x/\Ll(x)^{1-\beta+o(1)}$ odd integers $n\le x$ with
$\lambda(n)\le x^\beta$. All such $n$ have $l(n)\le x^\beta$
automatically.

\subsection*{Notation} Most of our notation is familiar. For instance, $\omega(n) = \sum_{p\mid n} 1$ and $\Omega(n) = \sum_{p^k \parallel n} k$, where here and below, $p$ always denotes a prime. Perhaps a little less standard are $P^{+}(n)$ for the largest prime factor of $n$ (with $P^{+}(1)=1$) and $P^{-}(n)$ for the least prime factor of $n$ (with $P^{-}(1)=\infty$). We write $\rad(n)$ for the \textsf{radical} of $n$, meaning the largest squarefree divisor of $n$. 


\section{Overview}
We will give separate arguments for the rigorous upper bounds and conditional lower bounds, in each of the two ranges of $y$:
\begin{itemize}
\item (Upper bound for small $y$) In \S\ref{sec:smallyupper}, we establish that $L(x,y) \le x \Ll(x,y)^{-1+o(1)}$ when $\exp((\log_2{x})^{1+\epsilon}) \le y \le x^{1/\log_2{x}}$ and $x\to\infty$ (Proposition \ref{prop:smally}). 
\item (Lower bound for small $y$) In \S\ref{sec:smallylower}, we assume Hypothesis U and show, in Proposition \ref{prop:smallylower}, that $L(x,y) \ge x \Ll(x,y)^{-1+o(1)}$ whenever $x\ge y \ge \exp((\log_2{x})^{1+\epsilon})$ and $x\to\infty$. 
\item (Lower bound for large $y$) In \S\ref{sec:largeylower}, we assume Hypothesis U and show, in Proposition \ref{prop:lowerlarge}, that $L(x,y) \ge x \Ll(x/y)^{-1+o(1)}$ when $y \ge \Ll(x)^2$ and $x/y\to\infty$. 
\item (Upper bound for large $y$) In \S\S\ref{sec:largeyupper}--\ref{sec:largeyupper2}, we show that $L(x,y) \le x\Ll(x/y)^{-1+o(1)}$ whenever $x\to\infty$ and $100 \le y \le x/\exp((\log_2{x})^{1+\epsilon})$ (Proposition \ref{prop:largeyupper}).
\end{itemize}

The attentive reader will notice that together these results \emph{almost} cover all of the assertions of Theorem \ref{thm:main}. However, if  $x^{1/\log_{2}{x}} < y < \Ll(x)^2$, we are claiming in Theorem \ref{thm:main} that $L(x,y) \le x \Ll(x,y)^{-1+o(1)}$, while this range is not included in Proposition \ref{prop:smally}. In fact, this is no problem at all: For these values of $y$, we have $\log \Ll(x,y) \sim \log \Ll(x) \sim \log \Ll(x/y)$, and so our ``upper bound for large $y$'' (Proposition \ref{prop:largeyupper}) furnishes the desired upper bound.

The first three bulleted components will come to us by adapting arguments of Erd\H{o}s and Pomerance, originally applied to study (related, but different) questions concerning the $\lambda$ and $\phi$-functions. 

The chief novelty of the paper is the proof of the upper bound for large $y$. (The cases when $x/y = \exp((\log_2{x})^{O(1)})$ require particularly delicate arguments.) This requires revisiting and reworking methods from \cite{pollack19} and its sequel paper \cite{pollack21}, used to investigate popular subsets for Euler's $\phi$-function and large values of the factorization counting function. Preliminary results are collected in \S\ref{sec:largeyupper}, after which the proof of Proposition \ref{prop:largeyupper} is detailed in \S\ref{sec:largeyupper2}. It seems plausible that some of the new results of \S\ref{sec:largeyupper} will find other applications.

Corollary \ref{cor:orders} is proved in \S\ref{sec:corollary}. The strategy there is inspired by the proof of Theorem 1 in \cite{KP05}.

\section{Upper bound for small $y$}\label{sec:smallyupper}
The following consequence of the prime number theorem appears, in stronger form, as \cite{MV07}*{Lemma 7.4}.
    
\begin{lem}\label{lem:primesum} Let $0 < c < 1$, and let $T$ be a positive real number for which $T^{1-c} \ge \exp(4)$. Then 
\[ \sum_{p \le T} \frac{1}{p^c} \ll \frac{T^{1-c}}{\log(T^{1-c})} + \log \frac{1}{1-c},\]
where the implied constant is absolute.
\end{lem}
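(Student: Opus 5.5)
We will prove the bound directly from the prime number theorem in the weak form $\pi(t)\ll t/\log t$, using partial summation. Write $\sum_{p\le T} p^{-c} = \int_{2^-}^{T} t^{-c}\,d\pi(t)$ and integrate by parts:
\[ \sum_{p\le T}\frac{1}{p^c} = \frac{\pi(T)}{T^c} + c\int_2^T \frac{\pi(t)}{t^{1+c}}\,dt \ll \frac{T^{1-c}}{\log T} + \int_2^T \frac{t^{-c}}{\log t}\,dt. \]
Since $T^{1-c}/\log T \le T^{1-c}/\log(T^{1-c})$, the boundary term is already acceptable. It remains to bound $I:=\int_2^T t^{-c}(\log t)^{-1}\,dt$.

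Substitute $t=e^{v}$ and set $w=(1-c)v$. This gives
\[ I=\int_{\log 2}^{\log T} \frac{e^{(1-c)v}}{v}\,dv = \int_{(1-c)\log 2}^{X} \frac{e^{w}}{w}\,dw, \qquad X:=(1-c)\log T=\log(T^{1-c})\ge 4. \]
We split the range at $w=1$. On $[(1-c)\log 2,\min(1,\cdot)]$ we have $e^w\le e$, so that piece is at most $e\log\frac{1}{(1-c)\log 2}\ll \log\frac{1}{1-c}+1$. On $[1,X]$ we split again at $X/2$. The part over $[1,X/2]$ is at most $e^{X/2}\log X$, which is $\ll e^X/X$. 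The part over $[X/2,X]$ is at most $\frac{2}{X}\int^X e^w\,dw\le 2e^X/X$. Since $e^X/X = T^{1-c}/\log(T^{1-c})$, we get
\[ I\ll \frac{T^{1-c}}{\log(T^{1-c})}+\log\frac{1}{1-c}+1. \]

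Finally, we absorb the stray constant $1$. The hypothesis $X\ge 4$ gives $e^X/X\ge e^4/4>1$, so the constant is dominated by the first term. This yields the lemma with an absolute implied constant.

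There is no genuine obstacle here; the only point needing care is the uniformity in $c$. When $c$ is close to $1$ the lower limit $(1-c)\log 2$ approaches $0$, and this is exactly what produces the $\log\frac{1}{1-c}$ term. This is also why we split at $w=1$ rather than bounding $e^w/w$ crudely.
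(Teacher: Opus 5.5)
Your proof is correct and complete, but it takes a different route from the paper, which gives no argument of its own. The paper quotes this lemma as a weakened consequence of Lemma 7.4 in Montgomery--Vaughan. That result is an asymptotic formula for $\sum_{p\le T}p^{-c}$, with main terms of the shape $\log\frac{1}{1-c}$ and $T^{1-c}/\log(T^{1-c})$, deduced from the prime number theorem with a remainder.

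You instead prove only the one-sided bound that is actually needed, and for that Chebyshev's estimate $\pi(t)\ll t/\log t$ suffices. (What you call the prime number theorem ``in weak form'' is really just Chebyshev's bound.) Partial summation reduces everything to $\int_{(1-c)\log 2}^{X} e^{w}w^{-1}\,dw$ with $X=\log(T^{1-c})\ge 4$. Splitting at $w=1$ and $w=X/2$ cleanly separates the pieces:
\begin{itemize}
\item the small-$w$ end produces the $\log\frac{1}{1-c}$ term;
\item the top end produces the $e^{X}/X=T^{1-c}/\log(T^{1-c})$ term;
\item the middle range is harmlessly dominated, since $e^{X/2}\log X\ll e^{X}/X$.
\end{itemize}
The hypothesis $X\ge 4$ then absorbs the additive constant.

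What your route buys is a self-contained, uniform-in-$c$ argument with minimal input. The cited result buys a matching lower bound, i.e.\ a genuine asymptotic, which Proposition \ref{prop:smally} never uses. One cosmetic point: the ``$\min(1,\cdot)$'' in your first range is unnecessary, since $(1-c)\log 2<1<4\le X$.
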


\begin{prop}\label{prop:smally} Fix $\epsilon > 0$. If $x,y\to\infty$ with
        \[ \exp((\log_2{x})^{1+\epsilon}) \le y \le x^{1/\log_2{x}}, \]
then 
\begin{equation}\label{eq:rankinupper} L(x,y) \le x\exp\left(-(1+o(1)) \frac{\log{x} \log_3{x}}{\log_2{y}}\right).\end{equation}
\end{prop}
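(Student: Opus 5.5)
We use Rankin's method, in the style of Erd\H{o}s--Pomerance. If $\lambda(n)\le y$, then every prime power $p^k \parallel n$ has $\lambda(p^k)\le y$. In particular $p-1\le y$ for every $p\mid n$, and the $p$-part of $n$ is at most about $y^2$. Hence $n$ lies in the multiplicative semigroup $\mathcal{S}_y$ of integers all of whose prime-power components $q$ satisfy $\lambda(q)\le y$. So $L(x,y)\le \#\{n\le x: n\in \mathcal{S}_y\}$.

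This alone is too weak; it only gives a smooth-number bound of shape $\exp(-u\log u)$ with $u=\log x/\log y$. The extra saving comes from the lcm condition: all $p-1$ must divide a common number $\le y$. So we instead write
\[ L(x,y) \le \sum_{m\le y} \#\{n\le x: p\mid n \Rightarrow p-1 \mid m,\ \ldots\}, \]
with prime powers handled similarly; an $n$ with $\lambda(n)=m$ has all $p-1\mid m$. For each $m\le y$ we apply Rankin's trick with exponent $c\in(0,1)$:
\[ \#\{n\le x:\ p\mid n\Rightarrow (p-1)\mid m\} \le x^{c}\prod_{d\mid m,\ d+1\text{ prime}}\Bigl(1-\frac{1}{(d+1)^{c}}\Bigr)^{-1}\cdots . \]
The sum of $p^{-c}$ over primes $p$ with $p-1\mid m$ is at most $\sum_{d\mid m} d^{-c}$. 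This is at most $\prod_{\ell\mid m}(1-\ell^{-c})^{-1}$, i.e.\ roughly $\exp\bigl(\sum_{\ell\le \log y}\ell^{-c}\bigr)$ in the worst case. Lemma \ref{lem:primesum} with $T\approx \log y$ bounds this, and the worst $m$ is essentially the primorial. Alternatively, one can bound $\tau(m)$ crudely and use a sharper estimate for divisor sums.

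Next we choose the parameter. Take $1-c = \frac{\log_3 x - \eta}{\log_2 y}$ for suitable small adjustments $\eta$, so that $(\log y)^{1-c}\approx \log_2 x$ up to factors of size $\log_3 x$. Then $\sum_{d\mid m}d^{-c}\le \exp\bigl(O(\log_2 x/\log_3 x)\bigr)$, or something comparable. The prime-power contribution, where exponents are controlled by $p^{k-1}\le y$, is negligible. Summing over $m\le y$ costs a factor $y$, which is admissible because $\log y\le \log x/\log_2 x = o(\log x\log_3 x/\log_2 y)$. This is exactly where the hypothesis $y\le x^{1/\log_2 x}$ is used. The main term $x^{c}=x\exp(-(1-c)\log x)=x\,\Ll(x,y)^{-1+o(1)}$ gives \eqref{eq:rankinupper}, provided the Euler product over $d\mid m$ is $\exp(o(\log x\log_3 x/\log_2 y))$.

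The main obstacle is bounding $\max_{m\le y}\sum_{d\mid m} d^{-c}$ uniformly. The plan is to use $\sum_{d\mid m}d^{-c}\le\prod_{\ell^a\parallel m}(1-\ell^{-c})^{-1}$ and dominate this by the product over the first $\omega(m)\ll \log y/\log_2 y$ primes. That gives $\exp\bigl(O(\sum_{\ell\le \log y}\ell^{-c})\bigr)$. By Lemma \ref{lem:primesum} with $T=\log y$, this sum is $\ll (\log y)^{1-c}/\log((\log y)^{1-c})+\log\frac{1}{1-c}$. With our choice of $c$, $(\log y)^{1-c}=\log_2 x\cdot e^{-\eta}$, so the sum is $O(\log_2 x/\log_3 x)$, which is $o(\log x\log_3 x/\log_2 y)$ since $\log_2 y\le \log_2 x$. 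The condition $T^{1-c}\ge e^4$ holds comfortably.

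The lower constraint $y\ge\exp((\log_2 x)^{1+\epsilon})$ ensures $1-c<1$, with $\log_3 x/\log_2 y$ small, so that $c$ is a valid Rankin exponent and the bound $x^c$ has the claimed shape. We will need to verify carefully that no hidden $y$-dependence in the prime-power factors spoils the $o(1)$.
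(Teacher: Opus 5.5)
Your proposal is correct and is essentially the paper's own argument. Both bound $\#\{n\le x:\lambda(n)=m\}$ by Rankin's trick over $n$ with $p\mid n\Rightarrow p-1\mid m$, use $\sum_{p-1\mid m}p^{-c}\le\sum_{d\mid m}d^{-c}\le\exp(O(\sum_{\ell\le 2\log y}\ell^{-c}))$, choose $c$ with $(\log y)^{1-c}\asymp\log_2 x$, and absorb the factor $y$ from the sum over $m$ using $y\le x^{1/\log_2 x}$.

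One bookkeeping fix in your last two paragraphs. The quantity that must be $o(\log x\log_3 x/\log_2 y)$ is $\sum_{d\mid m}d^{-c}$, which is bounded by $\exp(O(\log_2 x/\log_3 x))=(\log x)^{o(1)}$; the prime sum $O(\log_2 x/\log_3 x)$ is one exponential level lower, so comparing it to the target is not the right check. Also, since your Euler factors already contain the full geometric series in $p^{-c}$, prime powers need no separate treatment. Finally, what the argument needs is only $c\ge\epsilon/(1+\epsilon)$; the quantity $1-c$ is not necessarily small near $y=\exp((\log_2 x)^{1+\epsilon})$.
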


\begin{proof} We give a pointwise upper bound. Put $\ell(x,m) = \#\{n\le x: \lambda(n)=m\}$, so that $L(x,y) = \sum_{m \le y} \ell(x,m)$. We will show that $\ell(x,m)$ is bounded above by the expression on the right of \eqref{eq:rankinupper}, uniformly for positive integers $m\le y$. This suffices to prove the proposition. Indeed, 
\[ \log{y} \ll \frac{\log{x}}{\log_2{x}}, \quad\text{while}\quad \frac{\log{x} \log_3{x}}{\log_2{y}} \gg \frac{\log{x} \log_3{x}}{\log_2{x}}. \] Hence, $y = \exp(o(\log x\log_3 x/\log_2 y))$, and
\begin{align*} L(x,y) &\le y \max_{m \le y} \ell(x,m) \\
&=x\exp\left(-(1+o(1)) \frac{\log{x} \log_3{x}}{\log_2{y}}\right),
\end{align*}
as desired.

To bound $\ell(x,m)$, we lean on an idea of Pomerance (see the proof of \cite{pomerance89}*{Lemma 5.2}). If $\lambda(n)=m$, then $p-1\mid m$ for each prime $p\mid n$. Hence, for every $c>0$, we have (Rankin's trick)
\[ \ell(x,m) \le x^{c} \sum_{\substack{n \le x \\ p\mid n \Rightarrow p-1\mid m}} n^{-c} \le x^c \prod_{p:\, p-1\mid m}\left(1 + \frac{1}{p^c} + \frac{1}{p^{2c}} + \dots\right). \]

Below, we will select a value of $c\gg 1$. (Here we mean that $c$ is bounded below by a positive constant depending only on $\epsilon$.) Assuming this lower bound on $c$ for the time being, $1/p^c + 1/p^{2c} +\dots \le p^{-c} (1-1/2^c)^{-1} \ll p^{-c}$, so that
\[ \prod_{p:\, p-1\mid m}\left(1 + \frac{1}{p^c} + \frac{1}{p^{2c}} + \dots\right) \le \exp\left(\sum_{p:\, p-1\mid m}\frac{1}{p^c}+\frac{1}{p^{2c}}+\dots \right) \le \exp\left(O\left(\sum_{p:\, p-1\mid m}\frac{1}{p^{c}}\right)\right). \]
Furthermore,
\[ \sum_{p:\, p-1\mid m}\frac{1}{p^{c}} \le \sum_{p:\, p-1\mid m}\frac{1}{(p-1)^{c}} \le \sum_{d\mid m} \frac{1}{d^c} \le \prod_{p \mid m}\left(1+\frac{1}{p^c}+\frac{1}{p^{2c}}+\dots\right) \le \exp\left(O\left(\sum_{p\mid m}\frac{1}{p^c}\right)\right). \]
Collecting estimates, we see that for certain constants $C_1, C_2 > 0$ (depending only on $\epsilon$),
\begin{equation}\label{eq:lxmbound}
\ell(x,m) \le x^c \exp\left(C_1 \exp\left(C_2 \sum_{p \mid m} \frac{1}{p^c}\right)\right). \end{equation}

The sum over primes $p$ dividing $m$ is bounded above by the corresponding sum over the first $\omega(m)$ primes, which (as $m\le y$) is in turn bounded above by the sum over the primes not exceeding $2\log{y}$ (once $y$ is large enough). This can be estimated by Lemma \ref{lem:primesum}. We choose $c$ so that $(2\log{y})^{1-c}= \log_2{x}$, i.e.,
\[ c = 1-\frac{\log_3{x}}{\log(2\log{y})}. \]
Then 
\[ \sum_{p \mid m} \frac{1}{p^c} \ll \frac{(2\log{y})^{1-c}}{\log((2\log{y})^{1-c})} + \log \frac{1}{1-c} \ll \frac{\log_2{x}}{\log_3{x}}. \]

Recalling that $y \ge \exp((\log_2{x})^{1+\epsilon})$, we have that
$c > \epsilon (1+\epsilon)^{-1}$, vindicating our previous assumption that $c\gg 1$. Plugging this choice of $c$ into \eqref{eq:lxmbound} gives that
\begin{align*} \ell(x,m) &\le x \exp\left(-\frac{\log{x} \log_3{x}}{\log(2\log{y})}\right) \exp\left(O((\log{x})^{o(1)})\right) \\
&= x \exp\left(-(1+o(1)) \frac{\log{x}\log_3{x}}{\log_2{y}}\right),  \end{align*}
as needed. \end{proof} 

\section{Lower bound for small $y$}\label{sec:smallylower}

\begin{prop}\label{prop:smallylower} Assume Hypothesis U. Fix $\epsilon > 0$. If $x,y\to\infty$ with 
\[ x\ge y \ge \exp((\log_2{x})^{1+\epsilon}), \]
then 
\begin{equation}\label{eq:Lxylower} L(x,y) \ge x\exp\left(-(1+o(1)) \frac{\log{x} \log_3{x}}{\log_2{y}}\right).\end{equation}
\end{prop}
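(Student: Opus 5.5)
We will follow the Erd\H{o}s--Pomerance construction: products of distinct primes $p$ whose shifted values $p-1$ are all powersmooth. Fix a parameter $z$ and let $M=M(z)=\lcm[1,2,\dots,z]=e^{(1+o(1))z}$. If every prime factor $p$ of a squarefree $n$ has $P^{\ast}(p-1)\le z$, then $p-1\mid M$ for each $p\mid n$, so $\lambda(n)\mid M$ and $\lambda(n)\le M$. We take $z=(1-\eta)\log y$ with $\eta\to0$ slowly, so that $M\le y$. Then $L(x,y)$ is at least the number of squarefree $n\le x$ built from primes in
\[ \mathcal{P}=\{p\le T:\ P^{\ast}(p-1)\le z\}. \]
Here $T$ is a second parameter. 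It is chosen so that $\log T/\log z = \log T/\log_2 y$ is about $\log_2 x$, i.e.\ so that the typical $p$ is not too large.

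First we count $\mathcal{P}$ via Hypothesis U. Put $T=\exp(\log z\cdot \log_2 x/\log_3 x)$ and $u=\log T/\log z=\log_2 x/\log_3 x\to\infty$. The hypothesis applies since $z\ge(\log T)^{1+\epsilon'}$: indeed $\log T = \log_2 y\,\log_2 x/\log_3 x$, while $z\asymp \log y\ge(\log_2 x)^{1+\epsilon}$, so $z$ exceeds a power of $\log T$ larger than $1$. It gives
\[ \#\mathcal{P}\ge \pi(T)e^{-(1+o(1))u\log u}=T e^{-(1+o(1))\log_2 x}=T/(\log x)^{1+o(1)}. \]
We need $\log T=o(\log x)$; this holds because $\log T\le (\log_2 x)^2$, and this is where the hypothesis $y\ge\exp((\log_2x)^{1+\epsilon})$ is used to keep $u$ large.

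Next take $k=\lfloor \log x/\log T\rfloor$ and count products of $k$ distinct primes from $\mathcal{P}$. Each product is $\le T^k\le x$, and distinct $k$-subsets give distinct $n$. The count is at least
\[ \binom{\#\mathcal{P}}{k}\ge \Bigl(\frac{\#\mathcal{P}}{k}\Bigr)^k \ge \Bigl(\frac{T}{k(\log x)^{1+o(1)}}\Bigr)^k = T^k\exp\bigl(-(1+o(1))k\log_2 x\bigr), \]
using $\log k\le \log_2 x$. Now $T^k\ge x/T$, with $\log T=o(\log x\log_3 x/\log_2 y)$. Also
\[ k\log_2 x\sim\frac{\log x\,\log_2 x}{\log T}=\frac{\log x\log_3x}{\log_2 y}(1+o(1)), \]
using $\log z\sim\log_2 y$. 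Together these give $L(x,y)\ge x\exp(-(1+o(1))\log x\log_3x/\log_2y)$, which is \eqref{eq:Lxylower}.

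The main obstacle is the calibration of $T$. The choice must make both error sources negligible: the loss $u\log u$ from Hypothesis U, and the combinatorial loss $\log k$. Both must amount to $(1+o(1))\log_2 x$ per prime, while keeping $u\to\infty$ and $z\ge(\log T)^{1+\epsilon'}$ throughout the whole range $\exp((\log_2x)^{1+\epsilon})\le y\le x$. At the top of the range, $y$ near $x$, the term $\log T$ is tiny compared with the target exponent, so no issue arises. At the bottom, we must check that $\log z\ge(1+\epsilon')\log\log T$, i.e.\ $(1+\epsilon)\log_3 x\gtrsim \log(\log_2 y\log_2x/\log_3x)\approx 2\log_3 x$. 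This can fail when $\epsilon<1$. In that case we would instead take $u$ slightly smaller, e.g.\ $u=(\log_2x)^{\epsilon/2}$-sized adjustments, or replace $\log_2 x$ by $(\log_2 x)^{1-o(1)}$ in the definition of $T$. We would recheck that $u\log u$ is still $(1+o(1))\log_2 x$ relative to $\log T$. Alternatively, we would note that Hypothesis U is stated with $y\ge(\log x)^{1+\epsilon}$ for any fixed $\epsilon$, and choose $\epsilon'$ accordingly.
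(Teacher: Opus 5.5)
\textbf{There is a genuine gap.} It sits in the calibration step you flagged as the main obstacle: with your choice of $T$, the argument gives exponent $2$, not $1$.

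Since $\log T=\log z\cdot\log_2x/\log_3x\le(\log_2x)^2$, you have $k=(\log x)^{1-o(1)}$. So $\log k\sim\log_2 x$, not merely $\log k\le\log_2 x$. Hence $k(\log x)^{1+o(1)}=(\log x)^{2+o(1)}$, and your display should read
\[ \binom{\#\mathcal{P}}{k}\ge T^k\exp\bigl(-(2+o(1))k\log_2x\bigr). \]
Combined with $k\log_2x\sim\log x\log_3x/\log_2y$, this yields only $L(x,y)\ge x\exp(-(2+o(1))\log x\log_3x/\log_2y)$.

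This is not slack in the inequality $\binom{A}{k}\ge(A/k)^k$, since also $\binom{A}{k}\le(\mathrm{e}A/k)^k$. The construction with your parameters really loses the factor $2$. Your closing paragraph describes the cause: you tune $T$ so that the Hypothesis U loss $u\log u$ and the combinatorial loss $\log k$ are each $(1+o(1))\log_2x$ per prime, and these two losses add.

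\textbf{The missing idea} is that the Hypothesis U loss should dominate the combinatorial one. Write $\log T=u\log z$. The total loss is then about
\[ k(u\log u+\log_2x)\approx\frac{\log x}{\log z}\Bigl(\log u+\frac{\log_2x}{u}\Bigr), \]
which is minimized at $u\approx\log_2x$, where it equals $\frac{\log x}{\log z}(\log_3x+O(1))$.

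So take $\log T\approx\log_2x\cdot\log_2y$. The paper uses $X=\exp(\log_2x\log_2y)$ and $Y=\frac12\log y$, with $\lcm[1,\dots,\lfloor Y\rfloor]<\mathrm{e}^{2Y}=y$. With this choice:
\begin{itemize}
\item each prime costs $(1+o(1))\log_2x\log_3x$ from Hypothesis U;
\item the total combinatorial cost $k\log k\le\log x/\log_2y$ is smaller than the main term by a factor $\log_3x$;
\item the loss from $T^k\ge x/T$ is still negligible.
\end{itemize}
The range condition for Hypothesis U becomes $Y\ge(\log_2x\log_2y)^{1+\epsilon/2}$. This follows from $\log y\ge(\log_2x)^{1+\epsilon}$, and that is where the lower bound on $y$ is used.

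Incidentally, your worry about the range condition at the bottom of the $y$-range was misplaced. There $\log\log T=\log_3y+\log_3x-\log_4x\approx\log_3x$, not $2\log_3x$, so the condition holds for any $\epsilon'<\epsilon$ even with your original $T$.
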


\begin{rmk}\label{rmk:smoothexamples} It will emerge in the proof that the lower bound \eqref{eq:Lxylower} holds even if we restrict $L(x,y)$ to include only $n$ all of whose prime factors are bounded above by $\exp((\log_2{x})^2)$. This observation will be useful momentarily.
\end{rmk}

\begin{proof} We can (and will) assume that $0 < \epsilon < 1$. We follow the arguments of Erd\H{o}s \cite{erdos35} and Pomerance \cites{Pom80,pomerance89} used to construct ``popular'' values of Euler's $\phi$-function. Put
\[ X:= \exp(\log_2 x \log_2 y), \quad Y:= \frac{1}{2}\log{y}, \quad k:=\left\lfloor \frac{\log{x}}{\log{X}}\right\rfloor. \]
We let $S$ be the set of squarefree numbers $n$ constructed as products of $k$ distinct primes from the set
\[ \Pp:= \{p \le X: P^{\ast}(p-1) \le Y\}. \]
If $n \in S$, then $n \le X^{k} \le x$. Furthermore, for large $x,y$,
\[ \lambda(n) = \lcm_{p\mid n}[p-1] \le \lcm[1,2,\dots,\lfloor Y\rfloor] < \exp(2Y) = y. \]
Therefore, it suffices to bound $\#S$ from below by the expression on the right-hand side of \eqref{eq:Lxylower}. 

Putting $U:= \frac{\log{X}}{\log{Y}}$, we have that $U = (1+o(1))\log_2{x}$. Moreover, for large $x$ and $y$, we have $Y/(\log{Y})^2 =\frac{1}{2} \log{y}/(\log(\frac{1}{2} \log{y}))^2 > (\log_2{x})^{1+\frac{1}{2}\epsilon}$, so that
\[ Y > (\log_2{x})^{1+\frac{1}{2}\epsilon} (\log{Y})^2 > (\log_2 x \log_2 y)^{1+\frac{1}{2}\epsilon} = (\log{X})^{1+\frac{1}{2}\epsilon}.\]
Therefore, we may apply Hypothesis U to deduce that
\begin{align}\notag \#\Pp &\label{eq:Pprecise}= \pi(X) \exp\left(-(1+o(1)) U\log{U}\right) \\
&= X \exp(-(1+o(1)) \log_2 x \log_3 x).\end{align}

We proceed to estimate $\#S = \binom{\#\Pp}{k}$. By assumption, $\log_2 y \ge (1+\epsilon)\log_3{x}$, 
so that $X \ge \exp((1+\epsilon)\log_2 x\log_3 x)$. Thus, for large $x,y$,
\[ \#\Pp > \exp\left(\frac{\epsilon}{2} \log_2 x\log_3 x\right) > \log{x} > k, \]
and so in estimating $\#S$ we may invoke the inequality $\binom{A}{B} \ge \left(\frac{A}{B}\right)^{B}$, valid for all pairs of integers $A\ge B\ge 1$.
Referring back to the definition of $k$, we see that
\[ k^k = \exp\left(k\log{k}\right) \le \exp\left(\frac{\log{x}}{\log{X}} \log_2{x}\right) = \exp\left(\frac{\log{x}}{\log_2 y}\right) = \exp\left(o\left(\frac{\log{x} \log_3{x}}{\log_2{y}}\right)\right). \]
Furthermore, the estimate \eqref{eq:Pprecise} for $\#\Pp$ implies that
\begin{align*} (\#\Pp)^{k} &= X^k \exp(-(1+o(1)) k\log_2 x\log_3 x) \\
&\ge \frac{x}{X}\exp\left(-(1+o(1)) \frac{\log{x} \log_3{x}}{\log_2{y}}\right) \\
&= x \exp\left(-(1+o(1)) \frac{\log{x} \log_3{x}}{\log_2{y}}\right). \end{align*}
We conclude that $\#S \ge (\#\Pp)^k/k^k = x \exp(-(1+o(1)) \log x \log_3{x}/\log_2 y)$, as claimed.
\end{proof}

\section{Lower bound for large $y$}\label{sec:largeylower}

\begin{prop}\label{prop:lowerlarge} Assume Hypothesis U.  Whenever $x, y\to\infty$ with 
\[ y \ge \Ll(x)^2\quad\text{and} \quad \frac{x}{y}\to\infty,\] we have
\[ L(x,y) \ge x/\Ll(x/y)^{1+o(1)}. \]
\end{prop}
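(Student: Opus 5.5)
The plan is to build many $n\le x$ as products $n=ab$, where $a$ is a ``structured'' factor with tiny $\lambda(a)$ and $b$ is essentially arbitrary. Since $\lambda(ab)\le \lambda(a)\lambda(b)\le \lambda(a)\,b$, it suffices that $\lambda(a)\le w$ and $b\le y/w$ for a parameter $w$ to be chosen. Fix $A:=xw/y$. We take $a\le A$ with $\lambda(a)\le w$, and $b\le x/A=y/w$, so that $ab\le x$ and $\lambda(ab)\le y$.

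To control multiplicities we follow Remark~\ref{rmk:smoothexamples}. We restrict $a$ to have all prime factors $\le Z:=\exp((\log_2 x)^2)$, and $b$ to have $P^-(b)>Z$. Then $a$ is the $Z$-smooth part of $n$, so the map $(a,b)\mapsto ab$ is injective. By Proposition~\ref{prop:smallylower} and the remark, applied with $(A,w)$ in place of $(x,y)$, the number of admissible $a$ is at least $A\exp(-(1+o(1))\log A\log_3 A/\log_2 w)$, provided $w\ge\exp((\log_2 A)^{1+\epsilon})$. The number of $b\le y/w$ with $P^-(b)>Z$ is $\gg (y/w)/\log Z=(y/w)/(\log_2 x)^2$ by a standard sieve bound, provided $y/w\ge Z^{2}$, say. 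Multiplying gives
\[ L(x,y)\ \gg\ \frac{x}{(\log_2 x)^2}\exp\Bigl(-(1+o(1))\frac{\log A\,\log_3 A}{\log_2 w}\Bigr). \]

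Next we choose $w$ so that $\log A\sim\log(x/y)$ and $\log_2 w\sim\log_2(x/y)$. A natural choice is
\[ w:=\exp\bigl(\log(x/y)/\log_2(x/y)\bigr). \]
Then $\log A=\log(x/y)+\log w=(1+o(1))\log(x/y)$ and $\log_2 w=(1+o(1))\log_2(x/y)$. Hence the exponential factor is $\Ll(x/y)^{-1+o(1)}$ (here $\log_3 A\sim\log_3(x/y)$).

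The remaining task is the bookkeeping of the side conditions and the loss factor. This is where $y\ge\Ll(x)^2$ enters. First, $y/w\ge Z^2$ holds because $w\le x/y$ and $y$ is large. More importantly, $y\ge\Ll(x)^2$ guarantees $w\le y^{1/2}$, so the interval for $b$ has length at least $y^{1/2}$. The smoothness condition of Proposition~\ref{prop:smallylower} needs $\log(x/y)/\log_2(x/y)\ge(\log_2 A)^{1+\epsilon}$. Since $\log_2 A\le\log_2 x$, this can fail when $x/y$ is only $\exp((\log_2 x)^{O(1)})$. In that regime I would instead take $w:=\exp((\log_2 x)^{1+\epsilon})$ and check that $\log_2 w\sim\log_2(x/y)$ still up to a $1+O(\epsilon)$ factor, letting $\epsilon\to0$ at the end. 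Also, the Remark's condition ``primes $\le Z$'' refers to $\exp((\log_2 A)^2)\le Z$, which is fine.

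I expect the main obstacle to be the polylogarithmic loss $(\log_2 x)^{-2}$ (and the $O(\epsilon)$ slack) in the range where $x/y$ tends to infinity very slowly. There $\log\Ll(x/y)$ may be smaller than $\log_3 x$, so this loss is not $\Ll(x/y)^{o(1)}$. To handle it, I would first try replacing the rough-$b$ condition by a condition that costs only a constant factor. Concretely, I would take $b$ coprime to the fixed modulus $\prod_{p\le Z}p$ only ``on average'' via unique factorization into a $Z$-smooth and a $Z$-rough part. Alternatively, I would count pairs with bounded multiplicity, bounding the number of ways to split $n$ by how many $Z$-smooth divisors of $b$ can be absorbed while keeping $\lambda(a)\le w$. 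If that fails, I would treat $\log(x/y)\le(\log_2 x)^{3}$ separately. In that case the target $x/\Ll(x/y)^{1+o(1)}$ can be reached by taking $a$ to be a single prime $p\le A$ with $P^*(p-1)$ tiny and $b$ arbitrary up to $y/w$; I would verify that the resulting count is compatible with the target exponent.
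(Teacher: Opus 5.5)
Your construction is essentially the paper's. You write $n=ab$, where the smooth factor $a$ of size roughly $x/y$ (times a small parameter) with small $\lambda$ comes from Proposition~\ref{prop:smallylower} and the remark after it, and the rough cofactor $b$ of size roughly $y$ is counted by a sieve. The paper takes the middle parameter to be $\Ll(x/y)$ where you take $\exp(\log(x/y)/\log_2(x/y))$, and your choice works equally well. But as written there is a genuine gap, exactly where you suspected it. With the cutoff $Z=\exp((\log_2 x)^2)$ the sieve costs a factor $(\log_2 x)^2$. When $x/y$ grows slowly (say $\log(x/y)\asymp\log_3 x$) one has $\log\Ll(x/y)=o(\log_3 x)$, so this loss is not $\Ll(x/y)^{o(1)}$. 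The statement's $o(1)$ must tend to $0$ as $x/y\to\infty$ uniformly in $x$, so your bound does not prove the proposition. None of the remedies you list is carried out, and none is needed.

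The missing observation is that the cutoff should be measured at the scale of the smooth factor, not at the scale of $x$. Applying the remark with $(A,w)$ in place of $(x,y)$, every $a$ produced has all prime factors $\le Z_A:=\exp((\log_2 A)^2)$. So impose $P^-(b)>Z_A$ instead of $P^-(b)>Z$; injectivity of $(a,b)\mapsto ab$ is unaffected. Since $\log A=(1+o(1))\log(x/y)$, the sieve loss becomes $\log Z_A=(\log_2 A)^2\sim(\log_2(x/y))^2=\Ll(x/y)^{o(1)}$. The sieve still applies, because, as you noted, $y/w\ge y^{1/2}\ge\Ll(x)$, which is far larger than $Z_A^2$. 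The paper uses the cutoff $\exp((\log_2 B)^2)$ with $B=(x/y)\Ll(x/y)$ for precisely this reason.

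Relatedly, your worry about the hypothesis $w\ge\exp((\log_2 A)^{1+\epsilon})$ is unfounded. Since $\log_2 A=\log_2(x/y)+o(1)$, your $w$ satisfies it as soon as $x/y$ is large, with no reference to $x$. The apparent difficulty came only from bounding $\log_2 A$ by $\log_2 x$. Your fallback $w=\exp((\log_2 x)^{1+\epsilon})$ would in fact break the argument when $\log_2(x/y)=o(\log_3 x)$. In that case $\log_2 w\not\sim\log_2(x/y)$, and $\log A$ is dominated by $\log w$, so the count of $a$ loses far more than $\Ll(x/y)^{1+o(1)}$.
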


(Here $o(1)$ means a quantity tending to $0$ as $x/y\to\infty$, under the assumption that $y\ge \Ll(x)^2$.)

\begin{proof} Below, we say that a statement holds ``eventually'' if it holds whenever $x/y$ is sufficiently large, subject to $y\ge \Ll(x)^2$ (and $x,y\ge 100$). Put
\[ A:= \frac{y}{\Ll(x/y)},\quad B:= \frac{x}{y} \Ll(x/y). \]
Let $S$ be the collection of all ordered pairs $(a,b)$,
where 
\begin{equation}\label{eq:Sacond} a\le A \quad\text{with}\quad P^{-}(a) > \exp((\log_2{B})^2), \end{equation}
and 
\begin{equation}\label{eq:Sbcond} b\le B\quad\text{with}\quad P^{+}(b)\le \exp((\log_2{B})^2) \text{ and } \lambda(b) \le \Ll(x/y). \end{equation}
If $n=ab$ for some $(a, b) \in S$, then $n \le x$ and
\[ \lambda(n) = \lambda(ab) \le a \lambda(b) \le A \cdot \Ll(x/y)= y. \]
The restrictions on $P^{-}(a)$ and $P^{+}(b)$ guarantee that the map $(a,b)\mapsto ab$ is one-to-one. So the proposition will be proved if we show that $\#S \ge x/\Ll(x/y)^{1+o(1)}$.

Our assumption that $y\ge \Ll(x)^2$ implies that, eventually,
\[ \frac{y}{\Ll(x/y)} > \Ll(x/y), \quad\text{which in turn yields that}\quad \exp((\log_2{B})^2) = \left(\frac{y}{\Ll(x/y)}\right)^{o(1)}. \]
It now follows from Brun's sieve that the number of $a$ satisfying \eqref{eq:Sacond} is $\gg A/(\log_2{B})^2$. Hence, the number of choices for $a$ is at least $A \cdot \Ll(x/y)^{o(1)}$. On the other hand, by Proposition \ref{prop:smallylower} and the remark following, the number of choices of $b$ is at least $B \cdot \Ll(x/y)^{-1+o(1)}$ (keeping in mind that $B=(x/y)^{1+o(1)}$). Hence, $\#S \ge AB \cdot \Ll(x/y)^{-1+o(1)} = x \Ll(x/y)^{-1+o(1)}$, as desired.
\end{proof}

\section{Upper bound for large $y$: Preparation}\label{sec:largeyupper}
In this section, we collect several preliminary results needed to treat this range of $y$. We begin with a crude form of a result of de Bruijn.

\begin{lem}\label{lem:dB} For each fixed $\delta>0$ and all large $x$, we have $\sum_{n \le x} 1/\rad(n) < x^{\delta}$.
\end{lem}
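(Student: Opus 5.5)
The plan is to use Rankin's trick, as in the proof of Proposition \ref{prop:smally}. Fix $\delta>0$ and choose a small exponent $c\in(0,1)$ depending on $\delta$ (any $c<\delta$ will do). Since $(x/n)^c\ge 1$ for $n\le x$, we get
\[ \sum_{n\le x}\frac{1}{\rad(n)} \le x^{c}\sum_{n\ge 1}\frac{1}{\rad(n)\,n^{c}}. \]
The function $n\mapsto 1/(\rad(n)n^c)$ is multiplicative. So the right-hand series factors as an Euler product:
\[ \prod_{p}\Bigl(1+\frac{1}{p}\sum_{k\ge1}p^{-kc}\Bigr)=\prod_p\Bigl(1+\frac{1}{p(p^{c}-1)}\Bigr). \]

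The next step is to check that this product converges. For large $p$ the general term is $1+O(p^{-1-c})$, and $\sum_p p^{-1-c}<\infty$ because $c>0$. For the finitely many small primes, each factor is finite because $p^c>1$. Hence the product equals a finite constant $K(c)$, and we obtain $\sum_{n\le x}1/\rad(n)\le K(c)x^{c}$.

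Finally, take $c=\delta/2$. Then $K(\delta/2)x^{\delta/2}<x^{\delta}$ as soon as $x^{\delta/2}>K(\delta/2)$, which holds for all large $x$. This proves Lemma \ref{lem:dB}.

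There is no real obstacle here. The only point needing care is that the constant $K(c)$ blows up as $c\to0$, since the small primes contribute factors of size about $1/(p(p^c-1))$. This is harmless because $\delta$ is fixed, so $c$ is fixed as well.
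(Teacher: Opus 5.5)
Your proposal is correct and is essentially the paper's proof. The paper likewise applies Rankin's trick with exponent $\delta/2$, bounds the sum by $x^{\delta/2}\prod_p\bigl(1+p^{-1-\delta/2}+p^{-1-\delta}+\cdots\bigr)$, and uses the convergence of this Euler product to get $\ll x^{\delta/2}$.
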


\begin{rmk} According to Theorem 1 of de Bruijn's paper \cite{dB62}, we in fact have $\sum_{n\le x} 1/\rad(n) = \exp((1+o(1)) \sqrt{\frac{8\log{x}}{\log\log{x}}})$, as $x\to\infty$. \end{rmk}

\begin{proof} Observe that for all $x\ge 1$,
\[ \sum_{n \le x} \rad(n)^{-1} \le x^{\delta/2} \sum_{n \ge 1} \rad(n)^{-1} n^{-\delta/2} = x^{\delta/2} \prod_{p} (1+p^{-1-\frac{1}{2}\delta} + p^{-1-\delta} + p^{-1-\frac{3}{2}\delta} + \dots) \ll x^{\delta/2}. \qedhere\]
\end{proof}

\subsection{Factorizations and compositions} By a \textsf{factorization}, we mean a finite multiset of integers all at least $2$. We denote the factorization consisting of the integers $d_1,\dots,d_k$ as $\langle d_1,\dots,d_k\rangle$, and we say that $\langle d_1,\dots,d_k\rangle$ is a \textsf{factorization of the integer $d_1\cdots d_k$}. We let $f(n)$ denote the number of factorizations of the positive integer $n$ (where, by convention, $f(1)=1$). For example, the factorizations of $18$ are 
\[ \langle 18\rangle, \quad \langle 2, 9\rangle, \quad \langle 3, 6\rangle, \quad \langle 2, 3, 3\rangle, \]
and so $f(18)=4$. (What we call a factorization is sometimes referred to as a \textsf{multiplicative partition}.)

The following proposition is the main result of \cite{pollack21} and appears as that paper's Theorem 1.1. (With the $\phi$-preimage counting function $\#\phi^{-1}(n)$ in place of $f(n)$, this appears earlier as the main theorem of \cite{pollack19}.)

\begin{prop}\label{prop:originalfactprop} Fix $\delta > 0$ and $\beta \in (0,1)$. There is an $x_0 = x_0(\delta,\beta)$ for which the following holds. If $x>x_0$ and $S\subset [1,x]$ is a set of integers with $\#S \le x^{1-\beta}$, then
\begin{equation}\label{eq:fsum} \sum_{n \in S} f(n) \le x/\Ll(x)^{\beta - \delta}. \end{equation}
\end{prop}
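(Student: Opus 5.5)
The plan is to split every factorization of $n$ into its ``small'' and ``large'' parts, and then to treat two ranges for the small part differently: one range uses the sparsity of $S$, the other uses a Rankin-type bound. Fix a parameter $T=T(x)$ with $\log T = o(\log x)$ but $\log x/\log T \to\infty$ very slowly; for definiteness $T=\exp(\log x/(\log_3 x)^{1/2})$. Any factorization $\langle d_1,\dots,d_k\rangle$ of $n$ splits uniquely into the sub-multiset of parts $\le T$ and the sub-multiset of parts $>T$. These are a factorization of some $m\mid n$ with all parts $\le T$, and a factorization of $n/m$ with all parts $>T$. Writing $f_T(m)$ for the number of factorizations of $m$ into parts $\le T$ and $g_T(k)$ for the number into parts $>T$, we get
\[ \sum_{n\in S} f(n) = \sum_{n \in S}\sum_{m\mid n} f_T(m)\, g_T(n/m). \]
A factorization counted by $g_T(k)$ with $k\le x$ has at most $r:=\log x/\log T=(\log_3 x)^{1/2}$ parts. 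Hence $g_T(k)\le \tau(k)^{r}\le \exp(O(r\log x/\log_2 x))=\Ll(x)^{o(1)}$. So it suffices to bound $\sum_{n\in S}\sum_{m\mid n} f_T(m)$ by $x/\Ll(x)^{\beta-\delta/2}$.

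Next I split according to whether $m\le x^{\beta}$ or $m>x^{\beta}$. For $m\le x^{\beta}$, I use the sparsity of $S$. Each $n\in S$ has at most $\tau(n)=x^{o(1)}$ divisors, which is not quite good enough, so I count more carefully: the contribution is at most $\#S\cdot \sum_{m\le x^\beta} f_T(m)\cdot \max$, or more simply at most $\#S \cdot \tau_{\max}\cdot \max_{m\le x^\beta} f(m)$. Then I invoke the classical maximal-order result of Canfield--Erd\H{o}s--Pomerance, $\max_{m\le z} f(m) = z/\Ll(z)^{1+o(1)}$. With $z=x^{\beta}$ and $\log\Ll(x^\beta)\sim \beta\log\Ll(x)$, this gives a contribution of $x^{1-\beta}\cdot x^{o(1)}\cdot x^{\beta}\Ll(x)^{-\beta+o(1)}$. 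The stray $x^{o(1)}$ from $\tau(n)$ is too large to absorb into $\Ll(x)^{o(1)}$, so I must avoid it. Instead I bound the inner sum $\sum_{m\mid n,\, m\le x^\beta} f_T(m)$ by $\sum_{m\mid n} f(m)$. That quantity counts factorizations of $n$ with one marked ``extra part'', so it is at most $f(n\cdot p)$ for a new prime $p$, or $\le f(n)\cdot O(\log x)$. This keeps the loss at $\Ll(x)^{o(1)}$; if needed, I would replace $\tau$ by a Rankin-weighted divisor sum.

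For $m>x^{\beta}$, I drop the condition $n\in S$ entirely. The number of $n\le x$ divisible by $m$ is at most $x/m$, so the contribution is at most
\[ x\sum_{m> x^{\beta}} \frac{f_T(m)}{m} \le x\cdot x^{-\beta(1-c)}\sum_{m}\frac{f_T(m)}{m^{c}} = x^{1-\beta(1-c)}\prod_{2\le d\le T}(1-d^{-c})^{-1}. \]
I choose $1-c=\log_3 x/\log_2 x$. Then $x^{-\beta(1-c)}=\Ll(x)^{-\beta}$, while the Euler product is $\exp(O(T^{1-c}/(1-c)))$. We have $\log(T^{1-c}) = \log x\,(\log_3 x)^{1/2}/\log_2 x = o(\log \Ll(x))$, so the product is $\Ll(x)^{o(1)}$. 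Combining the two ranges with the bound on $g_T$ gives \eqref{eq:fsum} for large $x$.

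The main obstacle is the small-$m$ range: extracting the $\Ll(x)^{-\beta}$ saving from $\#S\le x^{1-\beta}$ without losing a divisor-function factor $x^{o(1)}$, which would swamp the saving whenever $\beta-\delta$ is small. I would first try to organize the count as the number of pairs consisting of an $n\in S$ and one factorization of $n$. I would then bound the number of pairs with small part $m$ by $\#S\cdot\max_{m\le x^\beta}f(m)$ times a loss that is only polynomial in $\log x$. If this fails, I would introduce a second threshold and iterate the small/large split, as in the arguments of \cite{pollack19}.
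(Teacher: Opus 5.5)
\paragraph{The gap: the range $m>x^{\beta}$.} This range breaks the argument. You bound $\prod_{2\le d\le T}(1-d^{-c})^{-1}$ by $\exp(O(T^{1-c}/(1-c)))$, and then conclude it is $\Ll(x)^{o(1)}$ because $\log(T^{1-c})=o(\log\Ll(x))$. But the exponent is $T^{1-c}$ itself, namely $\exp(\log x\,(\log_3 x)^{1/2}/\log_2 x)$, not its logarithm. The product really is at least $\exp\bigl((T^{1-c}-2)/(1-c)\bigr)$, which is doubly exponential and dwarfs every power of $x$.

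This is not a repairable slip. Once you discard the condition $n\in S$, your bound is at least $x\sum_{x^{\beta}<m\le x,\ P^{+}(m)\le T}1/m$, because $f_T(m)\ge 1$ for every $T$-smooth $m$. Since $u=\log x/\log T=(\log_3 x)^{1/2}$, the $T$-smooth numbers have density $\rho(u)=(\log x)^{-o(1)}$. So this sum is already $\ge x(\log x)^{1-o(1)}$, which is worse than the trivial bound. Summed over all $n\le x$ there is no saving in this range, so the sparsity of $S$ must be used here too.

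Shrinking $T$ does not help either. To produce $\Ll(x)^{-\beta}$ you need $1-c$ at least about $\log_3 x/\log_2 x$. The Rankin step then requires $T^{1-c}/(1-c)=o(\log\Ll(x))$, which forces roughly $\log T\le(\log_2 x)^2/\log_3 x$. But then $\gg\log x\log_3 x/(\log_2 x)^2$ parts may exceed $T$, and $g_T$ is no longer $\Ll(x)^{o(1)}$. For instance, if $k\le x$ is a product of that many distinct primes just above $T$, then $g_T(k)=f(k)$ is a Bell number of size roughly $\Ll(x)$ or more, which swamps the saving.

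\paragraph{The range $m\le x^{\beta}$.} Your worry about a stray $x^{o(1)}$ is unfounded. Indeed $\tau(n)\le\exp(O(\log x/\log_2 x))=\Ll(x)^{O(1/\log_3 x)}=\Ll(x)^{o(1)}$. Hence the crude bound $\#S\cdot\max_{n\le x}\tau(n)\cdot\max_{m\le x^{\beta}}f(m)$, combined with Canfield--Erd\H{o}s--Pomerance, already gives $x\Ll(x)^{-\beta+o(1)}$. Your proposed replacement $\sum_{m\mid n}f(m)\le f(np)\le(1+\Omega(n))f(n)$, by contrast, is circular: it just returns $O(\log x)\sum_{n\in S}f(n)$, the quantity you are trying to bound.

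\paragraph{The missing idea.} You need to sort the \emph{integers} $n$, not the parts of their factorizations. Then the estimate over all $n\le x$ is applied only to a set that is intrinsically rare in a weighted sense. The paper imports this proposition from \cite{pollack21} and adapts that method in proving Proposition \ref{prop:orderedfact}. Take $z=\exp((\log_2 x)^{1/2})$ and $A=(\log_2 x)^{1-\eta}$.
\begin{enumerate}
\item The $n$ with $\Omega_{>z}(n)>(1-\eta)\beta\log x/\log_2 x$ are handled over all $n\le x$ by the analogue of Lemma \ref{lem:AZ}, $\sum_{n\le x}A^{\Omega_{>z}(n)}f(n)\le x\Ll(x)^{o(1)}$. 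Dropping $S$ is harmless here, because the weight itself supplies the factor $\Ll(x)^{(1-\eta)^2\beta}$.
\item The $n$ with abnormally large $\Omega(n)$ are removed the same way (cf.\ Lemma \ref{lem:Blem}).
\item Write each remaining $n$ as $n_0n_1$, with $n_0$ the $z$-smooth part, so $n_0=x^{o(1)}$. Because $n_1$ has few prime factors, $f(n)\le x^{(1-\eta)\beta+o(1)}$ pointwise. Only now is $\#S\le x^{1-\beta}$ used, giving a total of $x^{1-\eta\beta+o(1)}$.
\end{enumerate}
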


We require a variant of Proposition \ref{prop:originalfactprop} with some uniformity in $\beta$, with factorizations replaced by ``multiplicative compositions with not too many parts.''

By a \textsf{multiplicative composition} of $n$, we mean a finite sequence of integers, all at least $2$, whose product is $n$. (This is sometimes called an \textsf{ordered factorization}.) We let $g(n)$ denote the total number of multiplicative compositions of $n$, setting $g(1)=1$, and we let $g_w(n)$ denote the number of multiplicative compositions of $n$ into $w$ parts. (We define $g_0(1) =1$ and $g_0(n)=0$ for $n>1$.) For example, the multiplicative compositions of 18 are 
\[ [18],\quad [2, 9],\quad [9,2],\quad [3,6],\quad [6,3],\quad [2,3,3],\quad [3, 2, 3],\quad [3, 3, 2],\]
so that $g_1(18)=1, g_2(18)=4, g_3(18)=3$, and $g(18)=8$. It will be useful to note that, trivially, $g_w(n) \le \tau_w(n)$ where $\tau_w$ is the $w$-fold divisor function.

Let $x\ge 100$. We say that the integer $w$ \textsf{belongs to the critical range} (with respect to $x$) if
\begin{equation}\tag{K} 1 \le w \le \frac{\log{x}}{(\log_2{x})^2} (\log_3{x})^2. \end{equation}

\begin{prop}\label{prop:orderedfact} Fix $\delta > 0$. There is an $x_0 = x_0(\delta)$ for which the following holds. If $x > x_0$, $\beta \in [\frac{1}{1000},1]$, and $S \subset [1,x]$ is a set of integers with $\#S \le x^{1-\beta}$, then
\[ \sum_{n \in S} g_w(n) \le x/\Ll(x)^{\beta-\delta} \]
for all integers $w$ belonging to the critical range.
\end{prop}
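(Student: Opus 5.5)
We follow the architecture of the proof of Proposition~\ref{prop:originalfactprop} in \cite{pollack21}, adapted to ordered compositions with $w$ parts and with the dependence on $\beta$ tracked explicitly. The first step is a pointwise-to-average reduction. Fix $n\in S$ and a composition $[d_1,\dots,d_w]$ of $n$. Choose a threshold $T$ (we intend $T=\exp((\log_2 x)^{2})$ or similar) and split the parts into \emph{small} parts $d_i\le T$ and \emph{large} parts $d_i>T$. Since every part is at least $2$ and $w$ lies in the critical range~(K), there are at most $\log x/\log T$ large parts. The large parts contribute a product $m$ with at most $\log x/\log T = o(\log x/\log_2 x\cdot \log_3 x)$ factors. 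By Rankin-type counting, the number of ways to arrange them (including the positions in the sequence, at most $\binom{w}{j}$ for $j$ large parts) is $\exp(o(\log \Ll(x)))$ on average. Concretely, the positions cost at most $w^{\log x/\log T}$, and
\[
\log\bigl(w^{\log x/\log T}\bigr)\le \frac{\log x\,\log_2 x}{(\log_2 x)^2}=o\!\left(\frac{\log x\log_3 x}{\log_2 x}\right).
\]

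The second step is to handle the small parts. The small parts multiply to a $T$-smooth number $s$, and $n=sm$. The number of ordered compositions of $s$ into $w-j$ parts is at most $\tau_{w}(s)$. We then need two facts. First, the count of $n\le x$ whose $T$-smooth part $s$ exceeds $x^{\eta}$ is tiny, controlled by $\Psi$-estimates: it is $x\exp(-(1+o(1))u\log u)$ with $u=\eta\log x/\log T$, which gives savings $\Ll(x)^{\gg \eta\log_2 x/\log_3 x}$. Second, for $s\le x^\eta$ we bound $\tau_w(s)\le w^{\Omega(s)}$. We also need $\Omega(s)$ to be typically small. This is where de Bruijn's Lemma~\ref{lem:dB} enters: summing $w^{\Omega(s)}/s$ over smooth $s$ with large $\Omega(s)-\omega(s)$ is controlled via $1/\rad(s)$.

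The third step combines these pieces with Hölder's inequality. We write
\[
\sum_{n\in S} g_w(n)\le \sum_{n\in S}\sum_{\substack{s m=n\\ s\ T\text{-smooth}}} \tau_w(s)\, h(m),
\]
where $h(m)$ counts compositions of $m$ into few large parts. Then we apply Hölder with exponent chosen so that $\#S\le x^{1-\beta}$ yields a factor $(x^{1-\beta}/x)^{1-1/q}$. Finally we optimize $q$ (with $q\to\infty$ slowly) against the moment
\[
\sum_{n\le x}\Bigl(\sum_{sm=n}\tau_w(s)h(m)\Bigr)^{q}\le x\,\Ll(x)^{q(1+o(1))},
\]
in the spirit of the classical bound $\sum_{n\le x} f(n)=x\Ll(x)^{1+o(1)}$. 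The resulting saving $x^{-\beta(1-1/q)}\Ll(x)^{1+o(1)}$ must be converted into $\Ll(x)^{-\beta+\delta}$, which requires the smooth/large split so that the dominant contribution comes from $n$ with many tiny factors. The uniformity in $\beta\ge 1/1000$ is harmless because all the constants used are uniform once $\beta$ is bounded below.

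The main obstacle is the moment bound for the weighted count $\sum_{n\le x}\bigl(\sum\tau_w(s)h(m)\bigr)^q$ when $w$ is as large as allowed by~(K). The factor $w^{\Omega(s)}$ could be as large as $\exp(\Omega(s)\log_2 x)$. I would first try to show, via Rankin's trick with exponent $1-\log_3 x/\log T$ as in Proposition~\ref{prop:smally} and Lemma~\ref{lem:primesum}, that smooth $s$ with $\Omega(s)\ge \log x\log_3 x/(\log_2 x)^2$ are rare enough. The critical-range upper bound on $w$ appears to be tailored precisely so that $w^{\Omega(s)}$ remains $\Ll(x)^{o(1)}$ after this truncation.
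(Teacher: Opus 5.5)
There is a genuine gap in your third step, which is where the whole saving is supposed to come from. The moment estimate
\[ \sum_{n\le x}\Bigl(\sum_{sm=n}\tau_w(s)h(m)\Bigr)^{q}\le x\,\Ll(x)^{q(1+o(1))} \]
is false for every fixed $q>1$, and a fortiori for $q\to\infty$. The inner sum majorizes $g_w(n)$. Take $w$ at the top of the critical range (K) and let $n\le x$ be the product of the first $k\sim\log x/\log_2 x$ primes. Counting surjections from the prime factors of $n$ onto the $w$ slots gives $g_w(n)\ge w!\,w^{k-w}\ge w^k\mathrm{e}^{-w}=x^{1-o(1)}$, so the left side is at least $x^{q(1-o(1))}$. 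A quick consistency check exposes this. If the moment bound held, H\"older would give $\sum_{n\in S}g_w(n)\le x^{1-\beta(1-1/q)}\Ll(x)^{1+o(1)}$, a power saving over $x$. That is impossible already for $\beta=1$ and $S$ the singleton $\{n\}$ just described. (Also, the ``classical bound'' you cite is misstated: $\sum_{n\le x}f(n)=x\exp((2+o(1))\sqrt{\log x})=x\Ll(x)^{o(1)}$; it is $\max_{n\le x}f(n)$ that is $x\Ll(x)^{-1+o(1)}$.)

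Your second step also fails, for three reasons.
\begin{enumerate}
\item With $\log T=(\log_2 x)^2$ and $u=\eta\log x/\log T$, one has $u\log u\sim\eta\log x/\log_2 x$. So the $\Psi$-saving is only $\Ll(x)^{-o(1)}$; the saving $\Ll(x)^{-\eta\log_2 x/\log_3 x}$ you claim equals $x^{-\eta}$.
\item That estimate bounds an unweighted count of $n$, not $\sum g_w(n)$.
\item The bound $\tau_w(s)\le w^{\Omega(s)}$ on $T$-smooth $s\le x^{\eta}$ is useless, because $\Omega(s)$ can be of order $\log x$. You flag this as the main obstacle but do not resolve it.
\end{enumerate}

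The missing idea is that the $\Ll(x)^{-\beta}$ saving comes from a weighted \emph{first} moment, with $\#S$ used only pointwise on a complementary set. The paper splits by prime size at $z=\exp((\log_2 x)^{1/2})$, writing $n=n_0n_1$ with $n_0$ the $z$-smooth part. It then proves by Rankin's trick, with $c-1=Aw/\log x$ (resp.\ $3w/\log x$), that
\[ \sum_{n\le x}A^{\Omega_{>z}(n)}g_w(n)\le x\Ll(x)^{o(1)}\quad (A=(\log_2x)^{1-\eta}),\qquad \sum_{n\le x}(3/2)^{\Omega(n)}g_w(n)\le x\Ll(x)^{o(1)}. \]
This choice of $c$ is exactly where the critical range is used. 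The first bound discards all $n$ with $\Omega_{>z}(n)>(1-\eta)\beta\log x/\log_2x$ at total cost $x\Ll(x)^{-(1-\eta)^2\beta+o(1)}$, without any reference to $\#S$. The second discards $n$ with $\Omega(n)>\log x/(\log_2x)^{2/3}$.

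For the surviving $n$, one has $n_0\le z^{\Omega(n)}=x^{o(1)}$, so $\tau_w(n_0)\le\zeta(2)^wn_0^2=x^{o(1)}$. This, not $w^{\Omega(n_0)}$, is how the small primes are tamed. Meanwhile $\tau_w(n_1)\le w^{\Omega_{>z}(n)}\le x^{(1-\eta)\beta+o(1)}$, because $\log w\le\log_2x$. Only at this point is $\#S\le x^{1-\beta}$ invoked, giving the power-saving bound $x^{1-\eta\beta+o(1)}$. The threshold on $\Omega_{>z}$ is tied to $\beta$ precisely so that these two regimes balance. No H\"older inequality, $\Psi$-estimate, or Lemma~\ref{lem:dB} is needed.
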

Here $\frac{1}{1000}$ was chosen for definiteness; it may be replaced with any fixed positive real number.

We prove Proposition \ref{prop:orderedfact} by modifying the method used to establish \cite{pollack21}*{Theorem 1.1}. The following two lemmas are analogues of \cite{pollack21}*{Lemma 2.1, Lemma 2.2}. Below,  $\Omega_{>z}(n):= \sum_{p^e \parallel n,~p > z} e$.

\begin{lem}\label{lem:AZ} Let
\[ z = \exp((\log_2{x})^{1/2}).\]
Fix any $\eta \in (0,1)$, and let 
\[ A = (\log_2{x})^{1-\eta}. \]
As $x \to\infty$,
\[ \sum_{n\le x} A^{\Omega_{>z}(n)} g_w(n) \le x \Ll(x)^{o(1)}, \]
where the decay of the $o(1)$ term to $0$ is uniform for $w$ in the critical range.
\end{lem}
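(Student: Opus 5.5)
Since $A^{\Omega_{>z}(n)}g_w(n)$ is controlled by a multiplicative count over compositions, the plan is to write every multiplicative composition $[d_1,\dots,d_w]$ of $n$ and factor each part as $d_i = s_i r_i$. Here $s_i$ is $z$-smooth and $r_i$ is composed of primes $>z$. Then $n = s r$ with $s=\prod s_i$ and $r=\prod r_i$, and $A^{\Omega_{>z}(n)} = \prod_i A^{\Omega(r_i)}$. Dropping the condition that each $d_i\ge 2$ (which only enlarges the count), we get
\[ \sum_{n\le x} A^{\Omega_{>z}(n)} g_w(n) \le \sum_{\substack{s\le x\\ P^+(s)\le z}} \tau_w(s) \sum_{\substack{r\le x/s\\ P^-(r)>z}} \sum_{r_1\cdots r_w = r} \prod_i A^{\Omega(r_i)} . \]
The inner sum over $(r_1,\dots,r_w)$ equals $\sum_{r} (wA)^{\Omega(r)}$ restricted to $P^-(r)>z$. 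The key point is that splitting the large-prime part among the $w$ slots costs a factor $w$ per prime. That is harmless only because $z$ is large enough that there are few such primes.

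Next I would bound the two pieces by Rankin's trick. For the large-prime part, use the Euler product
\[ \sum_{\substack{r\le X\\ P^-(r)>z}} (wA)^{\Omega(r)} \le X^{\sigma}\prod_{z<p\le x}\Bigl(1-\frac{wA}{p^{\sigma}}\Bigr)^{-1}, \]
valid once $p^\sigma > 2wA$ for all $p>z$. Instead of that, I would bound the number of large primes directly: $\Omega_{>z}(n)\le \log x/\log z = \log x/(\log_2 x)^{1/2}$. So $(wA)^{\Omega_{>z}} \le \exp\bigl(\log x\,\log(wA)/(\log_2x)^{1/2}\bigr)$, and since $\log(wA)\le 2\log_2 x$ this is useless as it stands. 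The plan therefore has to be more careful: I take $\sigma = 1-\eta'$ with $\eta'$ of size $\log_3 x/\log_2 x$ times a constant. Then $\sum_{p>z} wA/p^{\sigma}$ is controlled by $wA\,z^{-\eta'}$-type tails, at least when $w$ is small.

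For large $w$ I expect to need a better strategy. Most parts $d_i$ must be small, because $\prod d_i\le x$ with every $d_i \ge 2$ forces $w\le \log x/\log 2$. In addition, only $O(\log x/\log z)$ of the parts can contain a prime $>z$. So rather than distributing each large prime freely, I would choose the set of indices $i$ with $r_i>1$ first. This costs at most $\binom{w}{j}$ with $j \le \Omega_{>z}(n)\le \log x/(\log_2 x)^{1/2}$. With $w$ in the critical range, $\log\binom{w}{j}\le j\log(ew/j) \ll \frac{\log x}{(\log_2 x)^{1/2}}\log_2 x$. That is still too big compared with $\log\Ll(x)=\log x\log_3x/\log_2x$, so this crude count fails. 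Instead, I would bound $\sum_{n\le x}\tau_w(n)\,A^{\Omega_{>z}(n)}$ by Rankin with exponent $\sigma=1-c\log_3x/\log_2x$, in the manner of the proof of Proposition \ref{prop:smally}. The product over primes then factors into $\prod_{p\le z}(1-p^{-\sigma})^{-w}$ and $\prod_{p>z}(1-wA p^{-\sigma})^{-1}$-type terms. I would estimate the first by Lemma \ref{lem:primesum}: $w\sum_{p\le z}p^{-\sigma}\ll w z^{1-\sigma}/\log(z^{1-\sigma})$, and with $w\le \log x(\log_3 x)^2/(\log_2x)^2$ and $z^{1-\sigma}=(\log_2 x)^{c\log_3 x/(\log_2 x)^{1/2}}\to 1$ this is $o(\log\Ll(x))$. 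The factor $x^{\sigma}$ costs $x\cdot\Ll(x)^{-c}$, and I would take $c\to0$ slowly.

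The main obstacle is the primes $p>z$, where the local factor involves $wA$ rather than $A$. Summing $\tau_w$ over prime powers of $p>z$ gives $\sum_{p>z} wA p^{-\sigma}$, which may be as large as $w\cdot A\log_2 x$. That is $\gg\log\Ll(x)$ when $w$ is near the top of the critical range. To handle this, I would not use $\tau_w$ but the fact that compositions have parts $\ge2$. I would weight each composition by $\prod_i d_i^{-\sigma}$, which gives the generating identity
\[ \sum_n g_w(n)n^{-\sigma}A^{\Omega_{>z}(n)} = \Bigl(\sum_{d\ge2} d^{-\sigma}A^{\Omega_{>z}(d)}\Bigr)^{w} = (F(\sigma)-1)^w . \]
Here $F(\sigma)=\prod_{p\le z}(1-p^{-\sigma})^{-1}\prod_{p>z}(1-Ap^{-\sigma})^{-1}$. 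Then I take $w$-th powers: $(F-1)^w\le F^w$, and $\log F \ll \log z + A\sum_{z<p\le x}p^{-\sigma}$. The choice of $A=(\log_2x)^{1-\eta}$ makes the second term $\ll (\log_2 x)^{1-\eta}\log_2x\cdot x^{o(1)}$-free, namely $O((\log_2 x)^{2-\eta})$ after choosing $\sigma$ suitably. Then $w\log F \ll \frac{\log x(\log_3x)^2}{(\log_2x)^2}(\log_2 x)^{2-\eta} = o(\log\Ll(x))$. This compensation, in which the restriction on $w$ in the critical range exactly absorbs the factor $A$ through the slack $\eta$, is the step I would check most carefully, choosing $\sigma = 1 - \theta$ with $\theta\asymp \log_3x/\log_2x$ and $\theta\log x = o(\log \Ll(x))$.
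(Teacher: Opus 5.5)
Your final step fails. The problem is the choice of Rankin exponent, not the generating identity; that identity is exactly the one the paper uses.

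With $\sigma=1-\theta<1$, the series $\sum_{d\ge2}A^{\Omega_{>z}(d)}d^{-\sigma}$ diverges, and so does the product $\prod_{p>z}(1-Ap^{-\sigma})^{-1}$. You are therefore forced to truncate at $p\le x$. The primes in $(z,x]$ then contribute at least $A\sum_{z<p\le x}p^{-1}=A(\log_2x-\log_2z+O(1))\asymp A\log_2x$ to $\log F$. With $\theta\asymp\log_3x/\log_2x$ they contribute vastly more: of order $A\,x^{\theta}/(\theta\log x)$, where $x^{\theta}$ is a positive power of $\Ll(x)$.

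Even granting your bound $\log F=O((\log_2x)^{2-\eta})$, the concluding arithmetic is wrong. For $w$ near the top of the critical range,
\[ w\log F\asymp\frac{\log x(\log_3x)^2}{(\log_2x)^2}(\log_2x)^{2-\eta}=\frac{\log x(\log_3x)^2}{(\log_2x)^{\eta}}, \]
which exceeds $\log\Ll(x)=\log x\log_3x/\log_2x$ by the factor $(\log_2x)^{1-\eta}\log_3x\to\infty$. So no $\sigma\le1$ can work. Each of the $w$ factors pays $\exp(\asymp A\log_2x)$ for the primes in $(z,x]$, and the slack $\eta$ cannot absorb a full $\log_2x$. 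There is also nothing to gain from $\sigma<1$: for $w=1$ the sum is already at least $\lfloor x\rfloor-1$.

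The missing idea is to take the exponent slightly \emph{above} $1$, as the paper does: $c=1+Aw/\log x$.
\begin{enumerate}
\item The loss $x^c=x\mathrm{e}^{Aw}\le x\mathrm{e}^{AW}$ is harmless, where $W$ is the top of the critical range, because $AW=\log x(\log_3x)^2/(\log_2x)^{1+\eta}=o(\log\Ll(x))$.
\item In exchange the series converges, and the large primes are effectively cut off at $\exp(1/(c-1))$ rather than at $x$. The paper uses $\sum_{d\ge2}A^{\Omega_{>z}(d)}d^{-c}\ll\log z+\Gamma(A)(c-1)^{-A}$ (Lemma 2.1 of \cite{pollack21}).
\item The resulting cost $(c-1)^{-Aw}=(\log x/(Aw))^{Aw}$ is increasing in $Aw$ on $[0,\log x/\mathrm{e}]$. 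Hence it is at most $(\log x/(AW))^{AW}=\exp(O(AW\log_3x))=\Ll(x)^{o(1)}$.
\end{enumerate}
This is where $\eta$ is really used: it absorbs a factor $\log_3x$ per unit of $Aw$, not a factor $\log_2x$. The remaining factors $C^w$, $(\log z)^w$ and $\Gamma(A)^w$ are $\Ll(x)^{o(1)}$ directly, since $w\le W$.
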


\begin{proof} We once again apply Rankin's trick. For any choice of $c > 1$, 
\[ \sum_{n\le x} A^{\Omega_{>z}(n)} g_w(n) \le x^c \sum_{n \ge 1} \frac{A^{\Omega_{>z}(n)} g_w(n)}{n^c} = x^c \left(\sum_{d \ge 2} \frac{A^{\Omega_{>z}(d)}}{d^c}\right)^w.  \]
This exact sum on $d$ appears already in the proof of Lemma 2.1 of \cite{pollack21}, where it is shown that 
\[ \sum_{d \ge 2} \frac{A^{\Omega_{>z}(d)}}{d^c} \ll \log{z}+ \frac{\Gamma(A)}{(c-1)^{A}}, \]
uniformly for $c \in (1,2)$. 

We now select $c$ in order that
\[ c-1 = \frac{Aw}{\log{x}}. \]
Let
\begin{equation}\label{eq:Wupperlimit} W=\frac{\log{x}}{(\log_2{x})^2} (\log_3{x})^2 \end{equation} be the upper limit of the critical range (K). (Note that $1 < c \le 1 +\frac{AW}{\log{x}} < 2$ for all large $x$.) Then 
\[ x^c = x\exp(Aw) \le x\exp(AW) = x\Ll(x)^{o(1)}. \]
Furthermore, for a certain large positive constant $C_4$,
\begin{align*} \left(\sum_{d \ge 2} \frac{A^{\Omega_{>z}(d)}}{d^c}\right)^{w} &\le C_4^{w}\left(\log{z}+ \frac{\Gamma(A)}{(c-1)^{A}}\right)^w  \\
&\le (2C_4)^{w} \left((\log{z})^{w} + \frac{\Gamma(A)^{w}}{(c-1)^{Aw}}\right).
\end{align*}
Here we have
\begin{align*} (2C_4)^{w} &\le \exp(O(W)) = \Ll(x)^{o(1)}, \\(\log{z})^{w} &\le \exp(O(W \log_3 x)) = \Ll(x)^{o(1)},
\\ \Gamma(A)^{w} &\le \exp(O(AW\log{A})) = \Ll(x)^{o(1)}, \end{align*}
and
\begin{align*} (c-1)^{-Aw} &= \left(\frac{\log{x}}{Aw}\right)^{Aw} \le \left(\frac{\log{x}}{AW}\right)^{AW} \\ &\le \exp(O(AW \log_3 x)) = \Ll(x)^{o(1)}.
\end{align*}
(In the first line of the last display, we use that $t\mapsto (\frac{\log{x}}{t})^{t}$ is increasing for $t \le \frac{1}{\mathrm{e}}\log{x}$, while $AW \le \frac{1}{\mathrm{e}}\log{x}$.) Collecting estimates completes the proof.
\end{proof}

\begin{lem}\label{lem:Blem} As $x \to\infty$,
\[ \sum_{n\le x} (3/2)^{\Omega(n)} g_w(n) \le x\Ll(x)^{o(1)}, \]
where the decay of the $o(1)$ term to $0$ is uniform for $w$ in the critical range.
\end{lem}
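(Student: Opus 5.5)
Rankin's trick again, exactly as in the proof of Lemma \ref{lem:AZ}. Since $(3/2)^{\Omega}$ is completely multiplicative, for every $c>1$ we have
\[ \sum_{n\le x} (3/2)^{\Omega(n)} g_w(n) \le x^c \sum_{n\ge 1} \frac{(3/2)^{\Omega(n)} g_w(n)}{n^c} = x^c \left(\sum_{d\ge 2} \frac{(3/2)^{\Omega(d)}}{d^c}\right)^{w}. \]
The task therefore reduces to bounding the inner Dirichlet series uniformly for $c$ slightly larger than $1$, and then choosing $c$.

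The key step is to bound that inner sum. Write $\sum_{d\ge1}(3/2)^{\Omega(d)}d^{-c}$ as an Euler product $\prod_p (1-\frac{3}{2}p^{-c})^{-1}$, which converges for $c>1$ since the factor at $p=2$ is $(1-\frac32 2^{-c})^{-1}\le 4$. Compare with $\zeta(c)^{3/2}$: the ratio $\prod_p (1-\frac32 p^{-c})^{-1}(1-p^{-c})^{3/2}$ is bounded uniformly for $c\in(1,2)$. Indeed each factor is $1+O(p^{-2c})$ for $p\ge3$, and the $p=2$ factor is bounded. Since $\zeta(c)\ll 1/(c-1)$, this gives
\[ \sum_{d\ge2}\frac{(3/2)^{\Omega(d)}}{d^c} \ll (c-1)^{-3/2}, \qquad 1<c<2. \]
This is the analogue of the bound quoted from \cite{pollack21}, with $A=3/2$ and no $z$-truncation needed.

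Now choose $c-1 = w/\log x$. For $w$ in the critical range (K), with $W$ as in \eqref{eq:Wupperlimit}, we get $1<c<2$ for large $x$, and $x^c = x e^{w}\le xe^{W}=x\Ll(x)^{o(1)}$, because $W=o(\log x\log_3 x/\log_2 x)$. The other factor is at most $C^{w}(\log x/w)^{3w/2}$. Since $t\mapsto(\log x/t)^t$ is increasing for $t\le \log x/\mathrm e$, we have $(\log x/w)^{w}\le(\log x/W)^{W}=\exp(O(W\log_2 x))$. Also $W\log_2 x = \log x(\log_3 x)^2/\log_2 x$, which is \emph{not} $o(\log\Ll(x))$. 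So the crude choice must be refined; this is the main obstacle.

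To fix it, I would instead take $c-1 = \kappa w/\log x$ with a slowly growing parameter, or more cleanly, note that the loss is $\exp(\frac32 w\log(\log x/w))$. For $w\le W$, this is $\exp(O(W\log_2 x))$, which exceeds $\Ll(x)^{o(1)}$ by a factor $\log_3 x$. The remedy is to optimize $c$ exactly: minimize $c\log x + \frac32 w\log\frac1{c-1}$, giving $c-1=\frac{3w}{2\log x}$ and total loss $\exp(\frac32 w\log\frac{\log x}{w}+O(w))$. If this is still too large at $w\approx W$, I would exploit that multiplicative compositions into $w$ parts with $n\le x$ force many parts to be small. In that case I would split off the parts equal to primes $\le z$ or to small integers, bounding them combinatorially by a binomial coefficient rather than by $w$ independent factors. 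Alternatively, and preferably, I would check whether the intended critical range makes $w\log(\log x/w)=o(\log x\log_3x/\log_2x)$: at $w=W$ one has $\log(\log x/W)\approx 2\log_3 x$, hence $W\log(\log x/W)\approx 2\log x(\log_3x)^3/(\log_2x)^2 = o(\log\Ll(x))$. So the optimized Rankin bound indeed gives $x\Ll(x)^{o(1)}$ uniformly in $w\le W$. The earlier worry came from overestimating $\log(\log x/W)$ as $\log_2 x$; in fact it is only $O(\log_3 x)$, and the plan closes with the simple choice $c-1=w/\log x$.
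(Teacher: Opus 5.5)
Your argument is correct and is essentially the paper's. Both use Rankin's trick, a bound of the shape $(c-1)^{-O(1)}$ for $\sum_{d\ge 2}(3/2)^{\Omega(d)}d^{-c}$, the choice $c-1\asymp w/\log x$, and monotonicity of $t\mapsto(\log x/t)^t$ together with $\log(\log x/W)=O(\log_3 x)$.

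The only difference is in bounding that Dirichlet series. You get the sharper $(c-1)^{-3/2}$ by comparing the Euler product with $\zeta(c)^{3/2}$. The paper gets $(c-1)^{-3}$ by partial summation from $\sum_{m\le T}2^{\Omega(m)}\ll T(\log T)^2$.

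The ``obstacle'' raised in your middle paragraph was never real, since $\log(\log x/W)$ is $O(\log_3 x)$ rather than of order $\log_2 x$. You can therefore delete the fallback ideas there (splitting off small parts, a slowly growing $\kappa$).
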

\begin{proof} Put $S(T):= \sum_{m \le T} (3/2)^{\Omega(m)}$. Then $S(T) \le \sum_{m \le T} 2^{\Omega(m)} \ll T (\log{T})^2$ for all $T \ge 2$ (for the last estimate, see \cite{grosswald56} or \cite{MV07}*{Exercise 15(c), p.\ 42}). Thus, uniformly for  $c \in (1,2)$, 
\[ \sum_{d \ge 2} \frac{(3/2)^{\Omega(d)}}{d^c} \ll \int_{2}^{\infty} \frac{(\log{t})^2}{t^c}\,\mathrm{d}t < \int_{1}^{\infty} \frac{(\log{t})^2}{t^c}\,\mathrm{d}t = \frac{1}{(c-1)^3} \int_{0}^{\infty} s^2 \mathrm{e}^{-s}\,\mathrm{d}s = \frac{2}{(c-1)^3}. \]
(Here we have made the substitution $t = \mathrm{e}^{s/(c-1)}$.) Therefore, for $w$ in the critical range and $c \in (1,2)$, 
\[ \sum_{n \le x} (3/2)^{\Omega(n)} g_w(n) \le x^c \sum_{n\ge 1} \frac{(3/2)^{\Omega(n)} g_w(n)}{n^c} = x^c \left(\sum_{d \ge 2} \frac{(3/2)^{\Omega(d)}}{d^c}\right)^w \le x^{c} C_5^{w} (c-1)^{-3w}, \]
where $C_5$ is a certain large (positive) absolute constant.

Let $W$ be as in \eqref{eq:Wupperlimit}. We apply the upper bound of the last paragraph with $c$ chosen to satisfy $c-1 = \frac{3w}{\log{x}}$. Then $x^c \le x\exp(3W) = x \Ll(x)^{o(1)}$ and $C_5^{w} \le C_5^{W} = \Ll(x)^{o(1)}$.  Furthermore, 
\[ (c-1)^{-3w} = \left(\frac{\log{x}}{3w}\right)^{3w} \le  \left(\frac{\log{x}}{3W}\right)^{3W} \le \exp(O(W \log_3 x)) = \Ll(x)^{o(1)}.  \]
Again, collect estimates.
\end{proof}

We can now give the proof of Proposition \ref{prop:orderedfact}. 

\begin{proof}[Proof of Proposition \ref{prop:orderedfact}] We can and will assume $\delta \in (0,1)$. We begin by fixing $\eta := \frac{1}{4}\delta \in (0,1)$; this is the value of $\eta$ with which we will apply Lemma \ref{lem:AZ}.

Let us consider the contribution to $\sum_{n \in S} g_w(n)$ made by $n$ with $\Omega_{>z}(n) > (1-\eta)\beta \frac{\log{x}}{\log_2{x}}$. By Lemma \ref{lem:AZ}, for all $w$ in the critical range,
\[ A^{(1-\eta)\beta\log{x}/\log_2{x}}\sum_{\substack{n \in S\\\Omega_{>z}(n) > (1-\eta)\beta \frac{\log{x}}{\log_2{x}}}} g_w(n) \le  \sum_{n \le x} A^{\Omega_{>z}(n)} g_w(n) \le x \Ll(x)^{o(1)}. \]
Here and below, $o(1)$ means an expression that decays to $0$ as $x\to\infty$, uniformly in all parameters \emph{except} possibly the fixed quantity $\delta$. Rearranging,
\[ \sum_{\substack{n \in S\\\Omega_{>z}(n) > (1-\eta)\beta \frac{\log{x}}{\log_2{x}}}} g_w(n)  \le x \Ll(x)^{o(1)} \cdot A^{-(1-\eta)\beta\log{x}/\log_2{x}} = x/\Ll(x)^{(1-\eta)^2 \beta + o(1)}. \]
As
\begin{equation}\label{eq:funnyetainequality} (1-\eta)^2 \beta > \beta - 2\eta \beta = \beta - \frac{1}{2}\delta\beta \ge \beta - \frac{1}{2}\delta, \end{equation}
it follows that
\[ \sum_{\substack{n \in S\\\Omega_{>z}(n) > (1-\eta)\beta \frac{\log{x}}{\log_2{x}}}} g_w(n) < \frac{1}{2} x/\Ll(x)^{\beta-\delta} \] for all large enough $x$ (here and below, ``large enough'' means ``with respect to a threshold depending only on the fixed parameter $\delta$'').  

Similarly, Lemma \ref{lem:Blem} implies that
\[ \sum_{\substack{n \in S \\ \Omega(n) > \log{x}/(\log_2{x})^{2/3}}} g_w(n) \le x \Ll(x)^{o(1)} \cdot (2/3)^{\log{x}/(\log_2{x})^{2/3}}, \]
for all $w$ in the critical range. Here the right-hand expression is $o(x/\Ll(x)^{\beta-\delta})$.

Call the positive integer $n\le x$ \textsf{admissible} if 
\[ \Omega_{>z}(n) \le (1-\eta)\beta \frac{\log{x}}{\log_2{x}} \quad\text{and}\quad\Omega(n) \le \frac{\log{x}}{(\log_2{x})^{2/3}}. \]
To finish off the proposition, it is enough to show that (for $x$ large, and $w$ in the critical range) the sum of $g_w(n)$, taken over admissible $n\in S$, is smaller than $\frac{1}{3} x/\Ll(x)^{\beta-\delta}$. In fact, we will prove that this  sum is bounded by the much smaller quantity $x^{1-\frac{1}{2}\beta\eta}$. 

Suppose $n \in S$ is admissible, and write $n = n_0 n_1$, where $n_0$ is $z$-smooth and every prime factor of $n_1$ exceeds $z$. Then $g_w(n) \le \tau_w(n) = \tau_w(n_0) \tau_w(n_1)$. Since $n_0$ is $z$-smooth and $\Omega(n) \le \log{x}/(\log_2 x)^{2/3}$, we have that
\[ n_0 \le z^{\Omega(n)} \le \exp(\log{x}/(\log_2{x})^{1/6}) = x^{o(1)}. \]
Hence (uniformly for $w$ in the critical range),
\[ \tau_w(n_0) \le n_0^2 \sum_{m} \frac{\tau_w(m)}{m^2} = n_0^2 \cdot \zeta(2)^w < 2^w n_0^2 = x^{o(1)}. \]
Furthermore (again for $w$ in the critical range),
\[ \tau_w(n_1) \le w^{\Omega(n_1)} = w^{\Omega_{>z}(n)} \le w^{(1-\eta)\beta \log{x}/\log_2{x}} \le x^{(1-\eta)\beta+o(1)}. \]
Recalling that $\#S \le x^{1-\beta}$, we conclude that 
\[ \sum_{\substack{n \in S \\n \text{ admissible}}} g_w(n) \le x^{(1-\eta)\beta+o(1)} \#S \le x^{1-\beta\eta + o(1)}, \]
which is indeed smaller than $x^{1-\frac{1}{2}\beta\eta}$ for large enough $x$.
\end{proof}
\subsection{Euler's function} Our proof requires precise estimates for the frequency with which $\phi(n)$ possesses a given divisor $d$. The next lemma is a refinement, in terms of multiplicative compositions, of \cite{pollack19}*{Lemma 7} (which was expressed in terms of  factorizations). 

Let $\mathbf{d}=\langle d_1,\dots,d_k\rangle$ be a factorization with distinct parts $d_1',\dots,d_{\ell}'$, appearing with respective multiplicities $m_1,\dots,m_\ell$. The \textsf{symmetry constant} of $\mathbf{d}$, denoted $\sym(\mathbf{d})$, is the product $m_1! \cdots m_{\ell}!$.

\begin{lem}\label{lem:PV2} There is an absolute constant $C_{\phi}>0$ for which the following holds{\rm :} Let $x\ge 3$, and let $W > 0$. For each positive integer $d$, the number of squarefree positive integers $n \le x$ for which
\[ \omega(n)\le W\quad\text{and}\quad d\mid \phi(n) \]
is at most
\[ \frac{x}{d} \sum_{0 \le w \le W} \frac{(C_{\phi}(\log_2{x})(\log_2(3d)))^{w}}{w!} g_w(d). \]
\end{lem}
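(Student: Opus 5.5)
Since $n$ is squarefree, $\phi(n)=\prod_{p\mid n}(p-1)$. The plan is to attach to each $n$ counted a multiplicative composition of $d$ with at most $\omega(n)\le W$ parts, and then to count the $n$ that carry a given composition.

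\textbf{Step 1: a composition attached to $n$.} Write $n=p_1\cdots p_r$ with $p_1<\dots<p_r$ and $r\le W$. Since $d\mid \prod_i (p_i-1)$, we can split $d=e_1\cdots e_r$ with $e_i\mid p_i-1$. For instance, take $e_1=\gcd(d,p_1-1)$, then $e_2=\gcd(d/e_1,p_2-1)$, and so on. Discard the indices with $e_i=1$; the remaining $e_i$, in increasing order of $p_i$, form a multiplicative composition $[d_1,\dots,d_w]$ of $d$ with $w\le r\le W$. Moreover $n=q_1\cdots q_w\, m$, where $q_1<\dots<q_w$ are distinct primes with $q_j\equiv 1\pmod{d_j}$ and $m$ is squarefree and coprime to the $q_j$. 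Hence the count is at most
\[ \sum_{0\le w\le W}\ \sum_{[d_1,\dots,d_w]}\ \#\{n\le x:\ n=q_1\cdots q_w m,\ q_1<\dots<q_w,\ q_j\equiv 1 \ (\mathrm{mod}\ d_j)\}. \]

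\textbf{Step 2: count $n$ for a fixed composition.} Trivially, the number of $n$ is at most $\sum x/(q_1\cdots q_w)$, with the sum over tuples $q_1<\dots<q_w$, $q_j\le x$, $q_j\equiv1\pmod{d_j}$. By Brun--Titchmarsh and partial summation,
\[ \sum_{\substack{q\le x\\ q\equiv 1\ (\mathrm{mod}\ e)}} \frac1q\ \ll\ \frac{\log_2 x}{\phi(e)}\ \ll\ \frac{\log_2 x\,\log_2(3e)}{e}, \]
where the trivial bound covers $q\le e^2$, Brun--Titchmarsh covers $q>e^2$, and $e/\phi(e)\ll\log_2(3e)$. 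Since $d_j\le d$, each factor is at most $C\log_2x\log_2(3d)/d_j$. Because $\prod_j d_j=d$, the product over $j$ gives
\[ \frac{x}{d}\,\bigl(C\log_2 x\log_2(3d)\bigr)^w. \]

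\textbf{Step 3: obtain the factor $1/w!$.} This is the main obstacle. Ordering the $q_j$ does not by itself remove $w!$ from an unordered sum, since $d_j$ is tied to $q_j$. Two routes are available.

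The first route reorganizes the overcount by compositions. For a fixed $n$ and fixed unordered set of primes, all $w!$ assignments of the parts to the primes give compositions related by permutation. So the sum over compositions of the constrained ordered count equals $\frac{1}{w!}$ times the sum over compositions $[d_1,\dots,d_w]$ of the count of ordered tuples of distinct primes $(q_1,\dots,q_w)$ with $q_j\equiv1\pmod{d_j}$. That count is bounded exactly as in Step~2, giving $\frac{1}{w!}\frac{x}{d}(C\log_2x\log_2(3d))^w g_w(d)$.

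The second route, which I would actually write out to avoid subtleties, is to count pairs $(n,\text{composition})$. Map the pair (squarefree $n$, composition of $d$ with parts $e_i\mid p_i-1$ assigned to primes) to the set $\{(q_j,d_j)\}$. The set of pairs $(q,e)$ with distinct $q$ is unordered, so summing $\prod_j 1/q_j$ over $w$-element sets of pairs is at most $\frac{1}{w!}\bigl(\sum_{e}\sum_q 1/q\bigr)^w$ restricted to $\prod e=d$. That restricted sum equals $\frac{1}{w!}\sum_{[d_1,\dots,d_w]}\prod_j\sum_{q\equiv1\ (d_j)}1/q$.

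Combining this with the bound from Step~2 and summing over $0\le w\le W$ yields the lemma with an absolute $C_\phi$.
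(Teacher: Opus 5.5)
Your argument is correct and is essentially the paper's proof. The paper attaches to $n$ an unordered factorization of $d$, bounds the number of $n$ corresponding to it by $x\prod_j \frac{1}{m_j!}\bigl(\sum_{p\equiv 1 \bmod d_j'} 1/p\bigr)^{m_j}$, and then uses $\sum_{\mathbf{d}} 1/\sym(\mathbf{d}) = g_w(d)/w!$. You instead get the same $1/w!$ directly from the free action of permutations on pairs (composition, tuple of distinct primes), which is equally valid and avoids symmetry constants.

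One small repair is needed in Step~2. The trivial bound for primes $q\le e^2$ with $q\equiv 1\pmod{e}$ gives only $O(\log(3e)/e)$, which is too weak once $e$ is large. Use Brun--Titchmarsh in the form $\pi(t;e,1)\ll t/(\phi(e)\log(t/e))$ throughout $t\ge 2e$; then the range $2e\le t\le e^2$ contributes only $O(\log_2(3e)/\phi(e))$.
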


\begin{proof} We may assume that $1 < d \le x$. Let $\Fact(d)$ denote the set of factorizations of $d$. If $\mathbf{d} \in \Fact(d)$, we say that the positive integer $n$ \textsf{corresponds to $\mathbf{d}$} if, writing $\mathbf{d} = \langle d_1,\dots,d_k\rangle$, there are distinct primes $p_1,\dots,p_k$ dividing $n$ with each $p_i\equiv 1\pmod{d_i}$. 

Suppose $n$ is squarefree and $d\mid \phi(n)$. Write $n = p_1\cdots p_t$, where the $p_i$ are distinct primes and $t=\omega(n)\le W$. Then $d\mid (p_1-1)\cdots (p_t-1)$. Hence, we can write $d=d_1\cdots d_t$, where each $d_i \mid p_i-1$. Reordering, we can assume that $d_i > 1$ precisely for $1 \le i \le w$. This shows: Whenever $d\mid \phi(n)$, the integer $n\le x$ corresponds to some $\mathbf{d} \in \Fact(d)$ of length at most $W$ (namely, $\mathbf{d} = \langle d_1,\dots, d_w\rangle$). 

Now take any $\mathbf{d} \in \Fact(d)$. Suppose $\mathbf{d}$ has length $w$, say $\mathbf{d} = \langle d_1,\dots,d_w\rangle$. Let $d_1', \dots, d_{\ell}'$ be the distinct parts in $\mathbf{d}$, and let $m_1,\dots,m_{\ell}$ be their multiplicities (so that $m_1+\dots+m_\ell=w$). Then the number of $n\le x$ corresponding to $\mathbf{d}$ is bounded above by 
\begin{align*} x\prod_{j=1}^{\ell} \frac{1}{m_j!}\Bigg(\sum_{\substack{p \le x\\ p\equiv 1\pmod{d_j'}}} \frac{1}{p}\Bigg)^{m_j} &\le x \prod_{j=1}^{\ell}\frac{1}{m_j!}\left(\frac{C_{\phi}(\log_2{x}) (\log_2{(3d)})}{d_j'}\right)^{m_j} \\&= \frac{x(C_{\phi}(\log_2 x) (\log_2{(3d)}))^{w}}{d \,\sym(\mathbf{d})}. \end{align*}
(We use here that $\sum_{p \le x,\,p\equiv1\pmod{d_j'}} 1/p \ll \log_2{x}/\phi(d_j') \ll \log_2 x \log_2(3d_j')/d_j'$, from the Brun--Titchmarsh inequality and the minimal order of Euler's function.) Letting $\mathbf{d}$ (resp.\ $\mathbf{D}$) run over all factorizations (resp.\ compositions) of $d$ of length $w$,
\[ \sum_{\mathbf{d}} \frac{1}{\sym(\mathbf{d})} = \frac{1}{w!} \sum_{\mathbf{d}} \frac{w!}{\sym(\mathbf{d})} =\frac{1}{w!} \sum_{\mathbf{D}} 1 = \frac{g_w(d)}{w!}, \]
since each length $w$ factorization $\mathbf{d}$ is induced by $w!/\sym(\mathbf{d})$ compositions $\mathbf{D}$. (``Induced'' means ``recovered by forgetting order.'') Collecting estimates, the number of $n\le x$ corresponding to a length $w$ element of $\Fact(d)$ is bounded above by
\[ \frac{x}{d} g_w(d) \frac{(C_{\phi}(\log_2 x) (\log_2{(3d)}))^{w}}{w!}. \]
The proof of Lemma \ref{lem:PV2} is completed by summing on nonnegative integers $w\le W$.
\end{proof}

\section{Upper bound for large $y$: Completion of the proof}\label{sec:largeyupper2}
\begin{prop}\label{prop:largeyupper} Fix $\epsilon > 0$. Suppose $x\to\infty$ and
\begin{equation}\label{eq:mainconstraint} 100 \le y \le x/\exp((\log_2{x})^{1+\epsilon}). \end{equation}
Then
\[ L(x,y) \le x \Ll(x/y)^{-1+o(1)}. \]
\end{prop}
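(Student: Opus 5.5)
We would bound $L(x,y)$ by reducing to the divisibility statement $\phi(n)/\lambda(n)$ large, and then applying Lemma \ref{lem:PV2} together with Proposition \ref{prop:orderedfact}. Write $z := x/y$, so $z \ge \exp((\log_2 x)^{1+\epsilon})$, and the target is $x\Ll(z)^{-1+o(1)}$.

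\textbf{Step 1: discard atypical $n$.} First we remove $n$ with a large squarefull part. The $n\le x$ whose squarefull part exceeds $Q$ number $O(x/Q^{1/2})$, so we may take $n = n' s$ with $s$ squarefull and small, say $s \le \Ll(z)^2$. Next we remove $n$ with $\omega(n) > W$, where $W = \log z/(\log_2 z)^2 \cdot (\log_3 z)^2$ or a similar critical-range quantity. This needs care: we only need $\omega$ of a relevant part bounded, and the count of such $n$ should be small by Hardy--Ramanujan/Rankin when $W$ exceeds a large multiple of $\log_2 x$. This is satisfied because $z \ge \exp((\log_2 x)^{1+\epsilon})$ gives $W \gg (\log_2 x)^{1+\epsilon/2}$.

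\textbf{Step 2: set up the divisibility.} For the remaining $n$ with $\lambda(n)\le y$, let $m=\lambda(n')$. Each $p-1$ with $p \mid n'$ divides $m$, so $\phi(n')$ divides $m^{\omega(n')}$. The ratio $\phi(n')/m$ is at least roughly $(n'/\log_2 x)/y \approx z$ up to $\exp(O(\log_2 x\log_3 x))$ factors, and these are absorbed by $\Ll(z)^{o(1)}$ in our range. Set $d := \phi(n')/m$. Then $d$ is a large divisor of $\phi(n')$ with $\rad(d) \mid m$ and $d \gtrsim z$. We then count pairs $(m, n')$ as follows.

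\textbf{Step 3: count via Lemma \ref{lem:PV2}.} The key is to fix a divisor $d$ of size about $z$ composed only of primes dividing $m$, arranged in a way that costs little. A cleaner route is to fix $m$ and $d$ with $dm \mid \phi(n')$. For each $d\in[z',2z']$, Lemma \ref{lem:PV2} bounds the number of $n'\le x$ with $d\mid\phi(n')$ by
\[ \frac{x}{d}\sum_{w\le W}\frac{(C\log_2x\log_2(3d))^w}{w!}g_w(d). \]
We then need to sum over $d$ in a thin set $S$ of size $z^{1-\beta}$, with $\beta$ near $1$. The natural thin set is the set of $d$ whose radical is constrained. Lemma \ref{lem:dB} controls sums of $1/\rad$: summing over $d$ with given radical $r$ weighted by $1/d$ costs $z^{o(1)}/\rad(d)$. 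The remaining freedom in $m$ (at most $y$ choices) must be paid for by the factor $x/(dm)$. So we refine: count $n'$ with $dm\mid\phi(n')$ using $g_w(dm)$, and apply Proposition \ref{prop:orderedfact} at scale $X=dm\approx x$ with the set of $dm$ having structure. The structure $d \mid m^{W}$ forces $\#S\le (x/z)\cdot z^{o(1)} = x^{1-\beta}$ with $\beta = \log z/\log x$. This yields $x/\Ll(x)^{\beta-\delta}$, and since $\Ll(x)^\beta \approx \Ll(z)$ when $\log_2 z \sim \log_2 x$, the bound follows. The factor $(C\log_2 x\log_2 x)^w/w!$ is harmless because $w\le W$ and the $w!$ dominates, giving $\Ll(z)^{o(1)}$.

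\textbf{Main obstacle.} The range $z=\exp((\log_2 x)^{O(1)})$ is the hardest. There $\Ll(z)\ne\Ll(x)^\beta$ to leading order, and $\beta$ falls below $1/1000$, so Proposition \ref{prop:orderedfact} cannot be applied at scale $x$. Here I would try to apply Proposition \ref{prop:orderedfact} at scale $z$ instead. This means isolating $d\le z^{1+o(1)}$ with $\rad(d)$ small, say $\#\{d\}\le z^{o(1)}$ relevant radicals, so that $\beta\approx1$. One would then bound $\sum_d g_w(d)/d$ over $d\asymp z$ by $\Ll(z)^{-1+o(1)}$, handling $\omega(n)$ up to $\log_2 x$ scale with the $(\log_2 x)^w/w!$ weights and Lemma \ref{lem:AZ}-type Rankin estimates.
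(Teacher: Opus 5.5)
Your scale-$x$ argument in Step 3 is essentially sound when $z=x/y\ge x^{c}$ for a fixed $c>0$. On that range, Lemma \ref{lem:dB} bounds the number of products $dm$ with $m\le y$ and $\rad(d)\mid m$ by $\ll y\,z^{\delta}$; it is this, not $d\mid m^{W}$, that makes $S$ thin. Also $\beta=\log z/\log x$ stays bounded below, and $\log_2 z\sim\log_2 x$ gives $\Ll(x)^{\beta}=\Ll(z)^{1+o(1)}$. After a routine dyadic split in the size of $\phi(n')$, the rest goes through, so this is a legitimate alternative to the paper on that range. But it breaks down for every $z=x^{o(1)}$, not only for $z=\exp((\log_2 x)^{O(1)})$. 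There $\beta\to 0$, so Proposition \ref{prop:orderedfact} does not apply. Even an extension to small $\beta$ would be too weak: for $\log z=(\log x)^{\alpha}$ one has $\Ll(x)^{\beta}=\Ll(z)^{\alpha+o(1)}$. So the proposition rests on your last paragraph, which is only a sketch and is missing the essential idea.

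The missing idea is what makes the paper's argument uniform in $z$. Since $\rad(\phi(n))=\rad(\lambda(n))$, the ratio $d=\phi(n)/\lambda(n)$ satisfies $d\,\rad(d)\mid\phi(n)$. Lemma \ref{lem:PV2} should be applied to the divisor $d\,\rad(d)$: not to $d$, which carries too little information, and not to $dm=\phi(n)$, which is at the wrong scale.

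Localize dyadically $d\asymp D$ (with $z^{1-o(1)}\le D\ll z$) and $d\,\rad(d)\asymp X$, so that $\rad(d)\asymp X/D$ and $D\le X\le 2D^{2}$. Lemma \ref{lem:dB} then gives at most $(X/D)D^{\delta}$ such $d$. Thus $S=\{d\,\rad(d)\}\subset[1,X]$ satisfies $\#S\le X^{1-\beta}$ with $X^{\beta}\asymp D^{1-\delta}$ and $\beta\ge 2/5$. Now:
\begin{itemize}
\item Proposition \ref{prop:orderedfact} at scale $X\in[z^{1-o(1)},O(z^{2})]$ yields $\Ll(X)^{-\beta+\delta}\le\Ll(z)^{-1+O(\delta)}$.
\item The factor $x/(d\,\rad(d))\asymp x/X$ from Lemma \ref{lem:PV2} cancels the $X$ in $X/\Ll(X)^{\beta-\delta}$.
\item Because $\log_2(3d\,\rad(d))\ll\log_2 z$, the weight splits as $(\log_2 z)^{W}(\log x)^{O(1)}=\Ll(z)^{o(1)}$.
\end{itemize}

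Your proposed repair, restricting to $d$ with small radical, covers only part of this. The $d\ll z$ with $\rad(d)\le z^{\delta}$ do form a set of size at most $z^{3\delta}$. But nothing forces $\rad(d)$ to be small. For the remaining $d$ the candidate set is essentially all of $[D,2D]$, on which $\sum_{d}(x/d)\,g_w(d)$ gives only a trivial bound. It is precisely the extra factor $\rad(d)$ in the divisor that pays for the large-radical $d$.
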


\begin{proof} We start by reducing to the squarefree case. Suppose that under the same assumptions on $x,y$, the count of \emph{squarefree} $n\le x$ with $\lambda(n)\le y$ is at most $x \Ll(x/y)^{-1+o(1)}$. We take an arbitrary $n\le x$ with $\lambda(n)\le y$ and write $n=n_0 n_1$, where $n_0$ is squarefull, $n_1$ is squarefree, and $\gcd(n_0,n_1)=1$. For the sake of proving Proposition \ref{prop:largeyupper}, we may assume that $n_0 \le \Ll(x/y)^2$, as only $O(x \Ll(x/y)^{-1})$ integers up to $x$ are divisible by a squarefull number exceeding $\Ll(x/y)^2$. For each $n_0$, we count corresponding squarefree values of $n_1 \le x/n_0$. Certainly $\lambda(n_1) \le \lambda(n_0 n_1) \le y$. Under \eqref{eq:mainconstraint}, we have that 
\[ \frac{x}{n_0 y} > \left(\frac{x}{y}\right)^{1/2} > \exp\left(\frac{1}{2} (\log\log{x})^{1+\epsilon}\right) > \exp\left((\log\log{x})^{1+\frac{1}{2}\epsilon}\right) \ge \exp\left( (\log\log{(x/n_0)})^{1+\frac12\epsilon}\right),\]
so that \[ 100 \le y \le \frac{x/n_0}{\exp((\log_2(x/n_0))^{1+\frac{1}{2}\epsilon})}, \] uniformly in our range of $n_0$. Thus, applying our squarefree version, the number of $n_1$ corresponding to a given $n_0$ is at most $x n_0^{-1} \Ll(x/n_0 y)^{-1+o(1)} = x n_0^{-1} \Ll(x/y)^{-1+o(1)}$. Summing on $n_0$ gives the proposition as originally stated.

We may reduce further to the case when $\lambda(n)$ has the same order of magnitude as $y$. That is, it suffices to prove the upper bound of the proposition for
\[ L^{\ast}(x,y) := \#\{\text{squarefree } n\le x: \frac12 y< \lambda(n) \le y\}. \]
Indeed,
\[ \#\{\text{squarefree }n\le x: \lambda(n)\le y\}\le L(x,x^{1/\log_2{x}}) + \sum_{\substack{m \ge 0 \\ 2^{-m} y > x^{1/\log_2{x}}}}  L^{\ast}(x,2^{-m} y), \]
while $L(x,x^{1/\log_2{x}}) \le x\Ll(x)^{-1+o(1)}$ by Proposition \ref{prop:smally}. If we have the $L^{\ast}$-variant of Proposition \ref{prop:largeyupper}, then $L^{\ast}(x,2^{-m}y) \le x \Ll(x/y)^{-1+o(1)}$, uniformly in these $m$. We conclude by noting that there are $\ll \log{y} = \Ll(x/y)^{o(1)}$ values of $m$.

The rest of the proof is focused on proving this theorem about $L^{\ast}(x,y)$. More precisely, we show the following: Fix $\epsilon \in (0,1)$ and $\delta \in (0,\frac{1}{10})$. If $x$ is sufficiently large in terms of $\epsilon, \delta$, and $x,y$ satisfy \eqref{eq:mainconstraint}, then
\[ L^{\ast}(x,y) \le x \Ll(x/y)^{12\delta-1}. \]

Below, all estimates are made under the assumption that $x$ is sufficiently large (and that \eqref{eq:mainconstraint} holds). We emphasize that ``large'' means ``in terms of $\epsilon$ and $\delta$'' \emph{only}; the largeness thresholds are uniform in all other parameters.

We begin by throwing away all $n$ that do \emph{not} satisfy
\begin{equation}\label{eq:omegacond} \omega(n)\le W, \quad\text{where}\quad  W:= \left\lfloor \frac{10}{\epsilon} \frac{\log\frac{x}{y} \log_3 \frac{x}{y}}{(\log_2\frac{x}{y})^2}\right\rfloor.\end{equation}
By a well-known theorem of Hardy and Ramanujan, the number of $n$ we discard in this way is
\[ \ll \frac{x}{\log{x}} \sum_{w > W}\frac{(\log_2{x} + C_3)^{w-1}}{(w-1)!}. \]
(Here $C_3$ is a certain absolute constant.) 
As $\log \frac{x}{y} \ge (\log_2{x})^{1+\epsilon}$, we have  that $W \ge (\log_2{x})^{1+\frac{1}{2}\epsilon}$ and that each term in the right-hand sum on $w$ is at most half the previous. Thus,
\begin{align*} \frac{x}{\log{x}}\sum_{w > W} \frac{(\log_2{x}+C_3)^{w-1}}{(w-1)!} &\le 2\frac{x}{\log{x}} \frac{(\log_2{x}+C_3)^{W}}{W!} \le x
\frac{(2\log_2{x})^{W}}{W!} \le x\left(\frac{2\mathrm{e}\log_2{x}}{W}\right)^{W}.\end{align*}
Continuing, we observe that  $W^{1-\frac{1}{4}\epsilon} \ge 2\mathrm{e}\log_2{x}$, and so 
\[ \left(\frac{2\mathrm{e}\log_2{x}}{W}\right)^{W} = \exp\left(-W \log\frac{W}{2\mathrm{e}\log_2{x}}\right) \le \exp\left(-\frac{1}{4}\epsilon W \log{W}\right) < \Ll(x/y)^{-2}. \]
Thus, the number of $n$ removed in this initial step is smaller than $x\Ll(x/y)^{-1}$. 

Next, we place the integers $n \le x$ into dyadic intervals. For each $j \in \Z^{+}$, let \[ \Nn_j = (2^{-j} x, 2^{1-j} x].\] At the cost of excluding $O(x\Ll(x/y)^{-1})$ values of $n\le x$, we can (and will) assume that $n \in \Nn_j$ where 
\begin{equation}\label{eq:jcond}2^{-j} x >  x\Ll(x/y)^{-1}. \end{equation}

We continue by sorting the ratios $\frac{\phi(n)}{\lambda(n)}$ dyadically. Suppose $n \in \Nn_j$ and $\lambda(n)\in (y/2,y]$. Then 
\[ \frac{\phi(n)}{\lambda(n)} > \frac{\frac12 n/\log_2{n}}{\lambda(n)} > 2^{-1-j} \frac{x}{y\log_2{x}},  \]
while
\[ \frac{\phi(n)}{\lambda(n)} \le \frac{2^{1-j} x}{y/2} = 4 \cdot 2^{-j} \frac{x}{y}. \]
(In the lower bound, we use again the known minimal order of $\phi(n)$.) Put \begin{equation}\label{eq:Djdef} D_j = 2^{-1-j} \frac{x}{y\log_2{x}},\end{equation} and for each positive integer $k$, set
\[ \Dd_{j,k} = (2^{k-1} D_j, 2^{k} D_j]. \] Then $\frac{\phi(n)}{\lambda(n)} \in \Dd_{j,k}$ for a positive integer $k$ with 
\begin{equation}\label{eq:kcond} 2^{k} D_j \le 8 \cdot 2^{-j} \frac{x}{y}. \end{equation}

Our strategy will be to bound, for each $j$ satisfying \eqref{eq:jcond} and $k$ satisfying \eqref{eq:kcond}, the number of squarefree $n \in \Nn_j$ satisfying \eqref{eq:omegacond} for which $\frac{\phi(n)}{\lambda(n)} \in \Dd_{j,k}$. The desired estimate for $L^{\ast}(x,y)$ will follow upon summing on $j$ and $k$. 

Suppose $d=\frac{\phi(n)}{\lambda(n)}$. The integers $\lambda(n)$ and $\phi(n)$ share the same set of prime divisors. (By the theorems of Cauchy and Lagrange, the exponent of a finite group always has the same set of prime divisors as the order of the group.) Hence,
\[ d\,\rad(d) \mid d\,\rad(\phi(n)) = d\,\rad(\lambda(n)) \mid d\lambda(n) = \phi(n). \]
It therefore suffices to bound the number of squarefree $n \in \Nn_j$ satisfying \eqref{eq:omegacond} with $\phi(n)$ divisible by $d\, \rad(d)$ for some integer $d \in \Dd_{j,k}$.

We stratify dyadically one more time. Let $d \in \Dd_{j,k}$. As $d\,\rad(d) \in [d,d^2]$, there is some $\ell \in \Z^{+}$ with
\[ d\,\rad(d) \in \Ii_{j,k,\ell} := (2^{\ell-1}\cdot 2^{k-1}D_j, 2^{\ell} \cdot 2^{k-1}D_j], \]
for a value of $\ell$ with
\begin{equation}\label{eq:ellcond} 2^{\ell} \cdot 2^{k-1} D_j \le 2 \cdot(2^k D_j)^2. \end{equation} 

Letting $\ell$ run over positive integers satisfying \eqref{eq:ellcond}, we have from Lemma \ref{lem:PV2} and the observation that $d\,\rad(d) \le d^2 \le 2^{2k} D_j^2 < 64 (x/y)^2$,
\begin{align} \#\{\text{squarefree }&n \in \Nn_j:~\omega(n)\le W,~d\,\rad(d)\mid \phi(n)\text{ for some }d \in \Dd_{j,k}\} \notag\\ &\le \sum_{\ell} \sum_{\substack{d\in \Dd_{j,k} \\ d\,\rad(d) \in \Ii_{j,k,\ell}}} \#\{n \le 2^{1-j} x: d\,\rad(d)\mid \phi(n)\} \notag\\
&\le 2^{1-j} x \sum_{\ell} \sum_{\substack{d\in \Dd_{j,k} \\ d\,\rad(d) \in \Ii_{j,k,\ell}}} \frac{1}{d\,\rad(d)}\sum_{1 \le w \le W} \left(2C_{\phi}(\log_2{x}) \left(\log_2 \frac{x}{y} \right)\right)^{w} \frac{g_w(d\, \rad(d))}{w!}\label{eq:tosumfact}.
\end{align}
(We start the sum at $w=1$ rather than $w=0$ since $d\,\rad(d) > 1$ for our values of $d$.) 

We focus attention on the right-hand inner double sum on $d, w$. This will be estimated (uniformly in $j,k,\ell$) via Proposition \ref{prop:orderedfact}. To ease notation, when convenient below, we suppress subscripts indicating dependence on $j,k,\ell$.

The map $d \mapsto d\,\rad(d)$ is injective (on the entire domain of positive integers). We let 
\[ S = \{d\, \rad(d): d \in \Dd_{j,k},~d\,\rad(d) \in \Ii_{j,k,\ell}\}. \]
To estimate $\#S$, observe that if $d \in \Dd_{j,k}$ and $d\,\rad(d) \in \Ii_{j,k,\ell}$, then 
\[ \frac{1}{\rad(d)} = \frac{d}{d\,\rad(d)} \ge \frac{2^{k-1} D_j}{2^{\ell} \cdot 2^{k-1} D_j} = \frac{1}{2^{\ell}}. \]
As $\sum_{d \in \Dd_{j,k}} 1/\rad(d) \le \sum_{d \le 2^k D_j} 1/\rad(d) \le (2^k D_j)^{\delta}$, we conclude from the last display that
\[ \#S \le 2^{\ell} (2^{k} D_j)^{\delta}. \]

To put this in a form to which we can apply Proposition \ref{prop:orderedfact}, let $X =2^{\ell} \cdot 2^{k-1} D_j$ and $s = 2^{\ell}  (2^k D_j)^{\delta}$, so that
\[ S \subset [1,X] \quad\text{with}\quad \#S \le s. \]
We define $\beta$ by the relation \[ s = X^{1-\beta}. \]

From \eqref{eq:ellcond}, we have $2^{\ell} \le 4 \cdot 2^{k} D_j$, and thus
\begin{align} X &= 2^{\ell} \cdot 2^{k-1} D_j\notag \\ &\le 2 \cdot (2^k D_j)^2.\label{eq:Xupper} \end{align}
Hence, $2^k D_j \ge \frac{1}{2} X^{1/2}$, and
\begin{align*}
s = 2^{\ell} (2^k D_j)^{\delta} = \frac{X}{2^{k-1} D_j} (2^k D_j)^{\delta} = 2X (2^{k} D_j)^{\delta-1} \le 4 X^{\frac{1}{2}+ \frac{1}{2}\delta} < X^{0.6},
\end{align*}
recalling our restriction that $\delta \in (0,\frac{1}{10})$. It follows that $\beta \in [\frac{2}{5},1]$. 

We now return to estimating the sum over $d,w$ in \eqref{eq:tosumfact}. We claim that all $1\le w \le W$ fall into the critical range for the parameter $X$. 
To see this, we refer back to \eqref{eq:jcond} and \eqref{eq:Djdef}. These give us that
\begin{equation}\label{eq:Xlower} X = 2^{\ell} \cdot 2^{k-1} D_j > D_j = 2^{-j} x \cdot \frac{1}{2 y \log_2{x}}
\ge \frac{1}{2 \log_2{x}} \frac{x/y}{\Ll(x/y)} = (x/y)^{1+o(1)}. \end{equation}
Thus (see the definition \eqref{eq:omegacond} of $W$),
\begin{equation}\label{eq:critical2} W < \frac{11}{\epsilon}\frac{\log{X} \log_3{X}}{(\log_2{X})^2} < \frac{\log{X} (\log_3{X})^2}{(\log_2{X})^2}, \end{equation}
yielding criticality.

Continuing, notice that for all $w\le W$, 
\[ \left(\log_2 \frac{x}{y}\right)^{w} \le \left(\log_2 \frac{x}{y}\right)^{W} \le \exp\left(\frac{10}{\epsilon}\frac{\log\frac{x}{y}(\log_3\frac{x}{y})^2}{(\log_2 \frac{x}{y})^2}\right)= \Ll(x/y)^{\frac{10}{\epsilon}\frac{\log_3 \frac{x}{y}}{\log_2\frac{x}{y}}} = \Ll(x/y)^{o(1)}. \]
Furthermore, as $\Ll(x/y) \ge \Ll(\exp((\log_2{x})^{1+\epsilon})) > \exp((\log_2{x})^{1+\frac{1}{2}\epsilon})$, we have
\[ \frac{(2 C_{\phi}\log_2{x})^{w}}{w!} \le \sum_{w'\ge 0} \frac{(2C_{\phi}\log_2{x})^{w'}}{w'!} = \exp(2 C_{\phi}\log_2{x}) = (\log{x})^{2C_\phi} = \Ll(x/y)^{o(1)}. \]
Therefore, bearing in mind that every element of $\Ii_{j,k,\ell}$ exceeds $\frac{1}{2}X$, 
\begin{align} \sum_{\substack{d\in \Dd_{j,k} \\ d\,\rad(d) \in \Ii_{j,k,\ell}}}\frac{1}{d\,\rad(d)}\sum_{1\le w \le W} &\frac{\left(2 C_{\phi} (\log_2{x}) \left(\log_2 \frac{x}{y}\right)\right)^{w}}{w!} g_w(d\,\rad(d)) \notag\\ &\le 2X^{-1} \Ll(x/y)^{o(1)} \sum_{\substack{d\in \Dd_{j,k} \\ d\,\rad(d) \in \Ii_{j,k,\ell}}}\sum_{1 \le w \le W} g_w(d\,\rad(d)). \label{eq:bearing}
\end{align}

Applying Proposition \ref{prop:orderedfact},
\begin{align*} \sum_{\substack{d\in \Dd_{j,k} \\ d\,\rad(d) \in \Ii_{j,k,\ell}}}\sum_{1\le w \le W} g_w(d\,\rad(d)) &= \sum_{1\le w \le W} \sum_{s \in S} g_w(s) \le W \cdot X/\Ll(X)^{\beta-\delta} \le X/\Ll(X)^{\beta-2\delta}.\end{align*}
Substituting into \eqref{eq:bearing}, we conclude that the double sum on $d,w$ in \eqref{eq:tosumfact} is bounded above by $\Ll(X)^{2\delta-\beta} \Ll(x/y)^{\delta}$ (say).

Let us see how large this last expression is in terms of $\Ll(x/y)$. Notice that
\[ X^{\beta} = \frac{X}{s} = \frac{2^\ell \cdot 2^{k-1} D_j}{2^{\ell} (2^k D_j)^{\delta}} =  \frac{1}{2} (2^k D_j)^{1-\delta} > (x/y)^{1-2\delta}, \]
using in the last step that $D_j \ge (x/y)^{1+o(1)}$, from \eqref{eq:Xlower}. Hence,
\[ \Ll(X)^{\beta} = \Ll(X^{\beta})^{1+o(1)}> \Ll(x/y)^{1-3\delta}. \]
Furthermore, \eqref{eq:kcond} and \eqref{eq:Xupper} yield $X < 128 (x/y)^2$, and so 
\[ \Ll(X) \le \Ll(x/y)^{2+o(1)}. \]
Therefore,
\begin{align*} \Ll(X)^{2\delta-\beta} \Ll(x/y)^{\delta} &= \Ll(X)^{2\delta} \Ll(X)^{-\beta} \Ll(x/y)^{\delta}  \\
&\le \Ll(x/y)^{5\delta} \Ll(x/y)^{3\delta-1} \Ll(x/y)^{\delta} \\
&= \Ll(x/y)^{9\delta-1}.
\end{align*}

Referring back to \eqref{eq:tosumfact},  we deduce that for each $k$ and $j$, 
\begin{equation}\label{eq:finallydone}\#\{
\text{squarefree }n \in \Nn_j: \omega(n)\le W, d\,\rad(d)\mid \phi(n)\text{ for some }d \in \Dd_{j,k}\} \le 2^{1-j} x \Ll(x/y)^{9\delta-1} \sum_{\ell} 1, \end{equation}
where the remaining sum is on $\ell$ satisfying \eqref{eq:ellcond}. There are $\ll \log{x} = \Ll(x/y)^{o(1)}$ such values of $\ell$, and so the right-hand side of \eqref{eq:finallydone} is at most $2^{1-j} x \Ll(x/y)^{10\delta-1}$. It remains to sum on $k$ and $j$. For each $j$, there are (crudely) $\ll \log{x} = \Ll(x/y)^{o(1)}$ values of $k$. Summing finally on $j$, we conclude that the number of $n$ counted in this piece of the argument is at most $x\Ll(x/y)^{11\delta-1}$. Earlier, we discarded at most $x\Ll(x/y)^{-1}$ integers $n$, and so 
\[ L^{\ast}(x,y)\le x\Ll(x/y)^{11\delta-1} + x\Ll(x/y)^{-1} \le x\Ll(x/y)^{12\delta-1},\] which is what we claimed above.
\end{proof}

\section{Application to multiplicative orders: Proof of Corollary \ref{cor:orders}}\label{sec:corollary}
We will need the following clean lower bound on $l(n)$, in terms of $\lambda(n)$ and the orders $l(p)$ for primes $p$ dividing $n$.

\begin{lem}\label{lem:KR} For all odd natural numbers $n$,
\[ l(n) \ge \frac{\lambda(n)}{n} \prod_{p\mid n} l(p).\]
\end{lem}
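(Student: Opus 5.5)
The plan is to compare the two lcm-expressions $l(n) = \lcm_{p^k\parallel n} l(p^k)$ and $\lambda(n) = \lcm_{p^k\parallel n}\lambda(p^k)$ factor by factor. The first identity holds because $2^m\equiv 1\pmod n$ if and only if $2^m\equiv 1\pmod{p^k}$ for every $p^k\parallel n$, by the Chinese remainder theorem. The second is Gauss's formula; since $n$ is odd, $\lambda(p^k)=p^{k-1}(p-1)$ for every $p^k \parallel n$.

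The key elementary step is the following divisibility inequality for lcm's. Suppose $a_1,\dots,a_r$ and $b_1,\dots,b_r$ are positive integers with $a_i\mid b_i$ for each $i$. Then $\lcm[b_1,\dots,b_r]$ divides $\lcm[a_1,\dots,a_r]\cdot\prod_i (b_i/a_i)$, because each $b_i = a_i\cdot(b_i/a_i)$ divides that product. Consequently
\[ \lcm[a_1,\dots,a_r] \ge \lcm[b_1,\dots,b_r]\prod_{i}\frac{a_i}{b_i}. \]
I will apply this with $a_i = l(p_i^{k_i})$ and $b_i = \lambda(p_i^{k_i})$, where $n=\prod_i p_i^{k_i}$. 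The hypothesis $a_i\mid b_i$ holds since the order of $2$ divides the group exponent. This gives
\[ l(n) \ge \lambda(n)\prod_{p^k\parallel n}\frac{l(p^k)}{p^{k-1}(p-1)}. \]

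Next I bound the remaining factors. Reducing modulo $p$ shows $l(p)\mid l(p^k)$, so $l(p^k)\ge l(p)$. Also $p^{k-1}(p-1)\le p^k$, so $\prod_{p^k\parallel n} p^{k-1}(p-1)\le n$. Substituting both bounds yields $l(n)\ge \frac{\lambda(n)}{n}\prod_{p\mid n} l(p)$, as claimed.

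I do not expect a genuine obstacle. The only point needing care is the lcm inequality, specifically the direction of the divisibility. Beyond that, the proof is routine bookkeeping, and the oddness of $n$ is used only so that $l(\cdot)$ is defined and Gauss's formula takes the form $p^{k-1}(p-1)$.
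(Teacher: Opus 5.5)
Your proof is correct and is essentially the standard lcm-comparison argument from the sources the paper cites for this lemma (Kurlberg--Rudnick, and Lemma 5 of Kurlberg--Pomerance); the paper itself gives no proof beyond that citation. Your chain of inequalities in fact gives the slightly stronger bound $l(n) \ge \frac{\lambda(n)}{\phi(n)}\prod_{p\mid n} l(p)$, because $\prod_{p^k\parallel n} p^{k-1}(p-1) = \phi(n)$; only the final step weakens $\phi(n)$ to $n$.
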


Lemma \ref{lem:KR} is due essentially to Kurlberg and Rudnick \cite{KR01}*{\S\S5.1--5.2}; see \cite{KP05}*{Lemma 5} for an explicit statement and a slightly shorter proof.

We also need the following (well-known) lemma. 
\begin{lem}\label{lem:convergence} Let $\delta \in (0,1/2)$, and let $\Pp_{\delta}= \{\text{odd primes }p: l(p) < p^{\frac{1}{2}-\delta}\}$.  Then
\[ \sum_{p \in \Pp_{\delta}} \frac{1}{p^{1-\delta}} < \infty. \]
\end{lem}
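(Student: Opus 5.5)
The plan is to bound the number of primes $p\le T$ in $\Pp_{\delta}$ and then sum by parts. The count comes from the classical observation that a prime $p$ with $l(p)=m$ divides $2^m-1$.

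\emph{Step 1: counting.} If $p\in\Pp_{\delta}$ and $p\in(T/2,T]$, then $l(p)=m$ for some $m<T^{\frac12-\delta}$, and $p\mid 2^m-1$. Since $2^m-1<2^m$, it has at most $m\log 2/\log(T/2)$ prime factors exceeding $T/2$. Summing over all $m< T^{\frac12-\delta}$ gives
\[ \#\{p\in\Pp_{\delta}: T/2<p\le T\} \le \sum_{m<T^{1/2-\delta}} \frac{m\log 2}{\log(T/2)} \ll \frac{T^{1-2\delta}}{\log T}. \]
One could instead use the crude bound $\prod_{m<M}(2^m-1)<2^{M^2}$; this gives the same count, namely at most $M^2/\log_2(T/2)$ primes in the dyadic range with $M=T^{1/2-\delta}$.

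\emph{Step 2: summing.} Over the dyadic block $(T/2,T]$, the contribution to the series is
\[ \ll \frac{T^{1-2\delta}}{\log T}\cdot (T/2)^{-(1-\delta)} \ll T^{-\delta}. \]
Taking $T=2^i$ and summing over $i\ge 1$ gives the convergent geometric series $\sum_i 2^{-i\delta}<\infty$. This proves the lemma.

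There is no real obstacle here. The only point needing care is that the exponent $\tfrac12-\delta$ makes the count $T^{1-2\delta}$, which is smaller than $T^{1-\delta}$ by a power of $T$. That power saving is exactly what makes the weighted sum converge.
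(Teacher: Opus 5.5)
Your proof is correct and is essentially the paper's argument. The paper likewise notes that each $p\in\Pp_{\delta}\cap[1,T]$ divides $2^m-1$ for some $m\le T^{\frac12-\delta}$, that $2^m-1$ has fewer than $m$ distinct prime factors, and hence that there are at most $T^{1-2\delta}$ such primes, before finishing by partial summation. Your dyadic summation is a discretized form of that partial summation, and counting only prime factors above $T/2$ gains a factor of $\log T$ that is not needed. One notational point: in this paper $\log_2$ denotes $\log\log$, so write your base-$2$ logarithm as $\log(T/2)/\log 2$.
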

\begin{proof} For each $T\ge 1$, there are at most $T^{1-2\delta}$ primes $p \in \Pp_{\delta}\cap[1,T]$. Indeed, any such $p$ has $l(p) \le T^{\frac{1}{2}-\delta}$, so that $p \mid 2^n-1$ for some $n \le T^{\frac{1}{2}-\delta}$. But $2^n-1$ always has fewer than $n$ distinct prime divisors. Thus, the number of elements of $\Pp_{\delta}\cap[1,T]$ is at most $\sum_{n \le T^{\frac{1}{2}-\delta}}n \le T^{1-2\delta}$. Now apply partial summation.
\end{proof}

\begin{proof}[Proof of Corollary \ref{cor:orders}] We may (and will) assume that $0 < \delta < \frac12$ and that $0 \le \beta < \frac{1}{2}-\delta$. Below, we let
\[ \Mm = \Mm(x) := \Ll(x)^{\log_3{x}}, \]
noting that $\Mm$ tends to infinity faster than any power of $\Ll(x)$ while $\Mm = x^{o(1)}$ as $x\to\infty$.

Here is the plan of attack. We show that if $n \le x$ is odd, with $l(n) \le x^{\beta}$, then at least one of the following four conditions holds:
\begin{enumerate}
\item[(a)] $n \le x/\Mm$,
\item[(b)] $n$ has a squarefull divisor exceeding $\Mm$,
\item[(c)] $\lambda(n) \le x^{\frac{1}{2}+\beta + \frac{1}{2}\delta}$,
\item[(d)] $n$ has a divisor exceeding $\Mm$ composed entirely of primes $p \in \Pp_{\delta/4}$ (in the notation of Lemma \ref{lem:convergence}).
\end{enumerate}
On the other hand, we prove that the number of $n\le x$ for which at least one of (a)--(d) holds is bounded above by $x/\Ll(x)^{\frac{1}{2}-\beta-\delta}$. Corollary \ref{cor:orders} follows.

First, suppose all of (a)--(d) fail. Write $n = n_{\Pp} n_s n'$, where $n_{\Pp}$ is the largest divisor of $n$ composed of primes from $\Pp_{\delta/4}$, and $n_s$ is the largest squarefull divisor of $n/n_{\Pp}$. Then $n'$ is squarefree, every prime dividing $n'$ lies outside $\Pp_{\delta/4}$, and
\[ n' = \frac{n}{n_\Pp n_s} > \frac{x/\Mm}{n_\Pp n_s} \ge x/\Mm^3. \]
Furthermore, by Lemma \ref{lem:KR},
\begin{align*} l(n) \ge \frac{\lambda(n)}{n} \prod_{p \mid n} l(p) \ge x^{-\frac{1}{2} + \beta + \frac{1}{2}\delta} \prod_{p \mid n'} l(p) \ge x^{-\frac{1}{2} + \beta + \frac{1}{2}\delta} n'^{\frac{1}{2}-\frac{1}{4}\delta}
\ge x^{-\frac{1}{2} + \beta + \frac{1}{2}\delta} (x/\Mm^3)^{\frac{1}{2}-\frac{1}{4}\delta}
\ge x^{\beta+\frac{1}{8}\delta}.
\end{align*}
(Here and below, we always assume $x$ is sufficiently large; the notion of ``large'' may depend on $\delta$ but is uniform for $\beta \in [0,\frac{1}{2}-\delta)$.) In particular, we do not have $l(n) \le x^{\beta}$.  

Next, we show that each of (a)--(d) holds on a set of odd $n\le x$ of size at most $\frac{1}{4} x/\Ll(x)^{\frac{1}{2}-\beta-\delta}$. Conditions (a) and (b)  hold for $O(x/\Mm)$ and $O(x/\Mm^{1/2})$ values of $n \le x$, respectively, which is more than sufficient. Condition (c) is handled directly by Theorem \ref{thm:main}. Finally, the count of $n\le x$ for which (d) holds is bounded above by
\begin{align*} x \sum_{\substack{d > \Mm \\ p \mid d \Rightarrow p \in \Pp_{\delta/4}}} \frac{1}{d} &\le x \sum_{p \mid d \Rightarrow p \in \Pp_{\delta/4}} \frac{1}{d} \left(\frac{d}{\Mm}\right)^{\frac{1}{4}\delta} \\
&= x\Mm^{-\frac{1}{4}\delta} \prod_{p \in \Pp_{\delta/4}} \left(1 + \frac{1}{p^{1-\frac{1}{4}\delta}} + \frac{1}{p^{2(1-\frac{1}{4}\delta)}}+\dots\right) \\
&\ll x \Mm^{-\frac{1}{4}\delta} \exp\left(\sum_{p \in \Pp_{\delta/4}} \frac{1}{p^{1-\frac{1}{4}\delta}} \right) \\
&\ll x \Mm^{-\frac{1}{4}\delta},
\end{align*}
using Lemma \ref{lem:convergence} (with $\delta$ replaced by $\frac{1}{4}\delta$) for the last step. Again, this upper bound is tighter than required. This completes the proof.
\end{proof}

\begin{rmk}
It seems plausible that for each fixed $\theta>0$, the set of primes
$p$ with $l(p)<p^{1-\theta}$ is power-saving sparse, in the sense that
its counting function is $O(T^{1-\eta})$ for some $\eta=\eta(\theta)>0$. (For instance, this follows from Erd\H{o}s's conjecture in \cite{erdos76} that $\#\{p: l(p)=r\} = O_{\epsilon}(r^{\epsilon})$. It also follows from the weaker conjecture of Murty and Srinivasan, in the remark following Theorem 4 of \cite{MS87}, that $\sum_{p \le T} l(p)^{-1} = O_{\epsilon}(T^{\epsilon})$.)  Under this hypothesis, small modifications of the proof of
Corollary~\ref{cor:orders} show that for each fixed $\delta>0$, and all large $x$,
\[
\#\{\text{odd } n\le x: l(n)\le x^\beta\}
\le \frac{x}{\Ll(x)^{1-\beta-\delta}}
\]
uniformly in $\beta\ge0$.
\end{rmk}

\section*{Acknowledgements} I thank Casia Siegel for useful conversations. The simple proof of Lemma \ref{lem:dB} is taken from a beautiful talk by Jared Lichtman at the 2025 INTEGERS conference. 

\section*{Declaration of generative AI and AI-assisted technologies in the manuscript preparation process}
ChatGPT 5.5 was used for editing the manuscript and for pre-submission ``refereeing.'' It also suggested the use of Rankin's trick to handle condition~(d) in the proof of Corollary~\ref{cor:orders}; this replaces the more cumbersome Hardy--Ramanujanesque approach originally envisioned (cf.\ pp.~153--154 of \cite{KP05}). All other mathematical ideas and arguments are to be credited or blamed on the human author and those who trained him.
\bibliographystyle{amsplain}
\bibliography{SL}
\end{document}